\documentclass[numbers,sort&compress]{elsarticle}

\usepackage{amsmath,mathtools,amsfonts, amssymb}
\usepackage{bbold, mathrsfs, amscd,latexsym,amsthm,}
\usepackage{tikz}\usetikzlibrary {arrows.meta,bending,math,shapes.geometric,patterns}
\usepackage{caption}

\usepackage{enumerate}
\usepackage{etoolbox}
\usepackage{stmaryrd, booktabs}
\usepackage{cases}
\usepackage{tabularx, colortbl}
\usepackage{parskip}
\usepackage{geometry}

\usepackage{color, xcolor}
\usepackage{graphicx}
\usepackage{lettrine}
\usepackage{setspace}
\usepackage{verbatim}

\usepackage{hyperref, url}
\usepackage{framed}

\usepackage{xcolor}
\usepackage{framed}
\definecolor{shadecolor}{RGB}{153,204,255}

\DeclarePairedDelimiter{\abs}{|}{|}
\DeclarePairedDelimiter{\aset}{\{}{\}}
\DeclarePairedDelimiter{\extclass}{\llbracket}{\rrbracket}

\newcommand{\iiome}{_{i\in \omega}}  
\newcommand{\riome}{_{r\in \omega}}  
\newcommand{\siome}{_{s\in \omega}} 
\newcommand{\tiome}{_{t\in \omega}}  
\newcommand{\jiome}{_{j\in \omega}}  

\theoremstyle{plain}
\newtheorem{theorem}{Theorem}[section]
\newtheorem{lemma}[theorem]{Lemma}
\newtheorem{corollary}[theorem]{Corollary}
\newtheorem{proposition}[theorem]{Proposition}

\newtheorem{question}{Question}
\theoremstyle{definition}
\newtheorem{definition}[theorem]{Definition}

\theoremstyle{remark}

\newtheorem{remark}[theorem]{Remark}

\newcommand{\Nat}{\mathbb{N}}

\newcommand{\Xs}{\mathsf X}

\newcommand{\concat}{\, ^\smallfrown\, }

\newcommand{\ml}{Martin-L\"{o}f }

\newcommand{\ie}{i.e.\ }
\newcommand{\ien}{i.e.}

\newcommand{\lce}{left-c.e.\ }
\newcommand{\rce}{right-c.e.\ }
\newcommand{\dce}{d.c.e.\ }
\newcommand{\ca}{c.a.\ }

\newcommand{\lcen}{left-c.e.}
\newcommand{\rcen}{right-c.e.} 
\newcommand{\dcen}{d.c.e.}
\newcommand{\can}{c.a.}

\begin{document}
\title{Speedability of computably approximable reals and their approximations}
\author[aff1]{George Barmpalias}
\ead{barmpalias@gmail.com}
\author[aff1]{Nan Fang\corref{cor1}}
\ead{fangnan@ios.ac.cn}
\author[aff2]{Wolfgang Merkle}
\ead{merkle@math.uni-heidelberg.de}
\author[aff2,aff3]{Ivan Titov}
\ead{me@ivantitov.de}

\affiliation[aff1]{organization={Key Laboratory of System Software (Chinese Academy of Sciences), Institute of Software, Chinese Academy of Sciences},
city={Beijing},
country={China}}
\affiliation[aff2]{organization={Institut für Informatik, Ruprecht-Karls-Universität Heidelberg},
country={Germany}}
\affiliation[aff3]{organization={Université de Bordeaux, CNRS, Bordeaux INP, LaBRI, UMR 5800, F-33400 Talence},
country={France}}
\cortext[cor1]{Corresponding author}

\begin{abstract}
An approximation of a real is a sequence of rational numbers that converges to the real. An approximation is left-c.e.\ if it is computable and nondecreasing and is d.c.e.\ if it is computable and has bounded variation. A real is computably approximable if it has some computable approximation, and left-c.e.\ and d.c.e.\ reals are defined accordingly.

An approximation $\{a_s\}_{s \in \omega}$  is \emph{speedable} if there exists a nondecreasing computable function $f$ such that the approximation $\{a_{f(s)}\}_{s \in \omega}$ converges in a certain formal sense faster than $\{a_s\}_{s \in \omega}$. This leads to various notions of speedability for reals, e.g., one may require for a computably approximable real that either all or some of its approximations of a specific type are speedable.

Merkle and Titov established the equivalence of several speedability notions for left-c.e.\ reals that are defined in terms of left-c.e.\ approximations. We extend these results to d.c.e.\ reals and d.c.e.\ approximations, and we prove that in this setting, being speedable is equivalent to not being Martin-L\"{o}f random. Finally,  we demonstrate that every computably approximable real has a computable approximation that is speedable.
\end{abstract}
\maketitle
\section{Introduction}\label{sec:introduce}
\subsection{Left-c.e.\ reals and Solovay reducibility}
An approximation of a real is a sequence of rational numbers that converges to that real. In what follows, unless explicitly stated otherwise, by real we always refer to a real in~$(0,1)$, the open unit interval, and all rational numbers occurring in an approximation are in~$[0,1]$, the closed unit interval.  An approximation~$\aset{a_s}\siome$ is~\emph{computable} if~$a_s$ can be computed for given~$s$. Restricted variants of computable approximations are defined as follows. A computable approximation is \emph{left-c.e.} if it is nondecreasing, \emph{right-c.e.}\ if it is nonincreasing, and \emph{d.c.e}\ if it has bounded variation, i.e., if the sum over the terms~$\abs{a_s - a_{s+1}}$ is finite. A real is \emph{computably approximable} if it has a computable approximation, a real is \lce if it has a \lce approximation, and \rce  and \dce reals are defined accordingly. 

\begin{remark}\label{remark:dyadic-approximation}
Every \ca real has a computable approximation that consists of pairwise distinct dyadic rationals, i.e., of rationals with a denominator of the form~$2^k$. Likewise, every left-c.e.\ real has a left-c.e.\ approximation that consists of pairwise distinct dyadic rationals, and a similar remark holds for \rce and for \dce reals. 
The easy proofs of these well-known facts are left to the reader. 

Recall that an \emph{order} is a nondecreasing and unbounded function from the set of natural numbers to itself. For further use, note that for any computable order~$f$,  given a \ca approximation~$\aset{a_s}\siome$, then~$\aset{a_{f(s)}}\siome$ is a \ca approximation of the same real. It is easy to check that this implication remains true when replacing the two occurrences of \ca either both by \dcen, or both by \lcen, or both by \rcen.
\end{remark}

The class of \lce reals is a central object of study in the field of algorithmic randomness. Solovay reducibility is used to compare such reals by the speed of convergence of their \lce approximations. The original definition of Solovay reducibility by Solovay~\cite*{solovaydraft} applies to all reals, but when restricted to \lce reals, the following equivalent definition due to Calude et al.~\cite*{SCALUDE2001125} is better suited and more relevant to our discussion (see also the monograph by Downey and Hirschfeldt~\cite*{rodenisbook}). For \lce reals $\alpha$ and $\beta$, $\alpha$ is \emph{Solovay reducible} to $\beta$, denoted $\alpha \le_S \beta$, if 
\begin{itemize}
\item[$(*)$] 
there exist a \lce approximation $\{a_s\}_{s \in \omega}$ of $\alpha$, a \lce approximation  $\{b_s\}_{s \in \omega}$ of $\beta$, and a constant $c$ such that $\alpha - a_s \leq c (\beta - b_s)$ for all $s$.
\end{itemize}
It is well-known that for \lce reals~$\alpha$ and~$\beta$, Condition $(*)$ is equivalent to the following Condition $(*')$, and is equivalent to Condition $(*'')$ in case~$\beta$ is not rational.
\begin{itemize}
	\item[$(*')$] For any \lce approximation $\{a'_s\}_{s \in \omega}$ of $\alpha$, there exists a \lce approximation $\{b'_s\}_{s \in \omega}$ of $\beta$ and a constant $c$ such that  $\alpha - a'_s \leq c (\beta - b'_s)$ for all $s$.
	\item[$(*'')$] For any \lce approximation $\{b''_s\}_{s \in \omega}$ of $\beta$, there exists a \lce approximation $\{a''_s\}_{s \in \omega}$ of $\alpha$ and a constant $c$ such that  $\alpha - a''_s \leq c (\beta - b''_s)$ for all $s$.
\end{itemize}

Trivially, for \lce reals~$\alpha$ and~$\beta$, each of the two latter conditions implies the first one. For a proof of the reverse implications, assume that condition $(*)$ holds for some constant~$c$, and \lce approximations $\{a_s\}_{s \in \omega}$ and $\{b_s\}_{s \in \omega}$ of reals~$\alpha$ and~$\beta$, respectively, where we can assume~$a_0 = 0$. In case~$\alpha$ is rational, condition $(*')$ follows easily, details are left to the reader. Otherwise, given a \lce approximation $\{a'_s\}_{s \in \omega}$ of $\alpha$, for each $s$, let $f(s)$ be the largest index such that $a'_s \geq a_{f(s)}$. The function~$f$ is well-defined since~$a_0=0$, and it is a computable order since by case assumption, all values~$a_s$ and~$a_s'$ differ from~$\alpha$. Condition $(*')$ then follows by letting~$b'_s = b_{f(s)}$. Given a \lce approximation $\{b''_s\}_{s \in \omega}$ of $\beta$, for each $s$, let $g(s)$ be the smallest index such that $b''_s \le b_{g(s)}$. By assumption on~$\beta$, all values~$b_s$ and~$b_s''$ differ from~$\beta$, hence the function~$g(s)$ is a well-defined computable order, and condition $(*'')$ follows by letting~$a''_s = a_{g(s)}$.

These equivalences indicate that in a setting of \lce reals, the notion of Solovay reducibility is robust and, intuitively speaking, $\alpha \le_S \beta$ amounts to $\alpha$ being approximable by \lce approximations at least as fast as~$\beta$ up to a constant factor.

\subsection{Martin-Löf randomness}
In algorithmic randomness, various concepts of random reals or, essentially equivalent, random infinite binary sequences are investigated. The concept that is most central and receives the most attention in the literature is Martin-Löf randomness, which can be defined in several equivalent ways; see the monograph by Downey and Hirschfeldt~\cite*{rodenisbook} for details. In what follows, the term random is used as a shorthand for Martin-Löf random. 

\subsection{Convergence rate of random \lce reals}
The degree structure induced by Solovay reducibility on the class of \lce reals turned out to be rich and has been well studied~\cite*{solovaydraft, SCALUDE2001125, downey2002randomness, downey2007undecidability}.
A fundamental result is the following theorem due to Solovay~\cite{solovaydraft} and to Ku\v{c}era and Slaman~\cite{kucera2001randomness}.
\begin{theorem}[Solovay, Ku\v{c}era and Slaman]\label{thm:solocomp}
A \lce real is Solovay complete if and only if it is \ml random.
\end{theorem}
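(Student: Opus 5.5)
We prove the two directions separately, using the characterization of Solovay reducibility by condition $(*)$ and the equivalent conditions $(*')$ and $(*'')$ from above.

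\emph{Complete implies random.} We use that there exists a random \lce real; a standard example is Chaitin's $\Omega = \sum_{U(\sigma)\downarrow} 2^{-\abs{\sigma}}$ for a prefix-free universal machine $U$. It then suffices to show that Martin-L\"of randomness is closed upward under $\le_S$ among \lce reals. So let $\alpha \le_S \beta$ via left-c.e.\ approximations $\{a_s\}$, $\{b_s\}$ and a constant $c$, and assume $\beta$ is not random. We build a Solovay test covering $\alpha$.
\begin{itemize}
\item Since $\beta$ is not random, there is a Solovay test $\{I_n\}$ of rational intervals, with $\sum_n \abs{I_n} < \infty$, such that $\beta$ lies in infinitely many $I_n$.
\item Whenever an interval $I_n = (x,y)$ is enumerated, we wait for a stage $s$ with $b_s \in (x,y)$. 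If such a stage exists, we enumerate the interval $J_n = (a_s, a_s + c(y - b_s))$.
\item Suppose $\beta \in I_n$. The first stage $s$ with $b_s \in (x,y)$ exists because $b_s$ increases to $\beta$. At that stage $0 \le \alpha - a_s \le c(\beta - b_s) < c(y - b_s)$, so $\alpha \in J_n$. If $\alpha$ equals the left endpoint $a_s$, we widen each interval slightly; this changes the total length only by a summable amount.
\item The total length satisfies $\sum_n \abs{J_n} \le c \sum_n \abs{I_n} < \infty$.
\end{itemize}
Hence $\alpha$ lies in infinitely many $J_n$ and is not random. Applying this with $\beta$ the given Solovay complete real and $\alpha = \Omega$ shows that every Solovay complete \lce real is random.

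\emph{Random implies complete (Ku\v{c}era--Slaman).} Let $\alpha$ be a random \lce real and $\beta$ an arbitrary \lce real. We aim to show $\beta \le_S \alpha$, i.e., $\beta - b_s \le c(\alpha - a_{g(s)})$ for suitable left-c.e.\ approximations and a computable order $g$.
\begin{itemize}
\item Fix left-c.e.\ approximations $\{a_s\}$ of $\alpha$ and $\{b_s\}$ of $\beta$, with $b_0 = 0$ and $\beta < 1$.
\item For each $n$ we build a c.e.\ open set $U_n$ of measure at most $2^{-n}$. Whenever $b$ increases by $\delta$ at some stage, we take the current $a_t$ and, if $a_t$ is not already covered, enumerate into $U_n$ the interval $[a_t, a_t + 2^{-n}\delta)$.
\item The total measure enumerated into $U_n$ is at most $2^{-n}\sum_s (b_{s+1} - b_s) \le 2^{-n}\beta \le 2^{-n}$. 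So $\{U_n\}$ is a Martin-L\"of test.
\item Since $\alpha$ is random, $\alpha \notin U_n$ for some $n$.
\item For this $n$, each interval enumerated into $U_n$ lies strictly below $\alpha$. Hence the approximations $a_t$ must later move past every such interval, so $\alpha - a_t \ge 2^{-n}(\text{remaining increase of } \beta)$.
\item Synchronizing stages via this $n$ gives $\beta - b_s \le 2^n(\alpha - a_{g(s)})$, which is condition $(*)$ with $c = 2^n$.
\end{itemize}

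\emph{Main obstacle.} The delicate step is the bookkeeping in the second direction. To bound the remaining increase of $\beta$ by $2^n(\alpha - a_t)$ we need the intervals to accumulate. We therefore enumerate the interval from the supremum of the already covered region above $a_t$, rather than from $a_t$ itself. When $\alpha \notin U_n$, the total covered length lying above $a_t$ and below $\alpha$ is then at least $2^{-n}$ times the future increase of $\beta$. This is the step to check carefully, namely that the covered region starting at the current $a_t$ is an interval below $\alpha$. The first direction is a routine test transfer.
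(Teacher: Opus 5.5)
The paper does not prove this theorem; it only quotes it from Solovay and Ku\v{c}era--Slaman, so I am comparing your proposal with the standard arguments. Your first direction is Solovay's usual transfer of a Solovay test along $\le_S$, and it is fine. The length bound $c(y-b_s)\le c(y-x)$ uses $b_s>x$, which your choice of $s$ guarantees.

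In the second direction, the construction in your bullet list does not work as stated. If $\delta$ is simply discarded whenever $a_t$ is already covered, then $\alpha\notin U_n$ only gives $\alpha-a_t\ge 2^{-n}\delta_t$ at the stages where something is enumerated. The increases of $\beta$ at all other stages are not controlled, so the bound by the remaining increase of $\beta$ does not follow.

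The variant in your last paragraph is the right one, and the step you flag can be checked as follows.
\begin{itemize}
\item Take $\{b_t\}$ strictly increasing, let $\delta_t=b_{t+1}-b_t$, and let $R_t$ be the right endpoint of the interval enumerated at stage $t-1$, with $R_0=0$.
\item Since $\{a_t\}$ is nondecreasing, induction on $t$ shows three things: $U_n[t]\subseteq[0,R_t)$; the covered block containing $a_{t-1}$ is $[l,R_t)$ with $l\le a_{t-1}$; and hence the point you enumerate from is exactly $x_t=\max\{a_t,R_t\}$.
\item So the intervals $[x_t,x_t+2^{-n}\delta_t)$ are pairwise disjoint and laid end to end from left to right.
\item If $\alpha\notin U_n$ and $\alpha$ is irrational, then $a_t<\alpha$ and $R_t\neq\alpha$. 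Induction then gives $x_t<\alpha$ and $R_{t+1}\le\alpha$ for all $t$.
\item Therefore the intervals enumerated at stages $t'\ge t$ are disjoint subintervals of $[a_t,\alpha)$, which yields
\[
\beta-b_t=\sum_{t'\ge t}\delta_{t'}\le 2^n(\alpha-a_t)\quad\text{for all } t .
\]
\end{itemize}
This is condition $(*)$ with $c=2^n$, and no reindexing $g$ is needed.

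This differs slightly from the usual Ku\v{c}era--Slaman bookkeeping. There one enumerates only at stages $s$ at which $a_s$ is not yet covered by $U_n$, and then adds $(a_s,a_s+2^{-n}(b_s-b_{s'}))$, where $s'$ is the previous enumeration stage. Deferred increases of $\beta$ are thus paid in one lump. Both constructions give $\mu(U_n)\le 2^{-n}\beta$. The standard version yields the Solovay inequality only along the enumeration stages, so one has to pass to subsequences of the approximations. Your ``extend the current block'' version gives it at every stage directly.
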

Here, a \lce real is \emph{Solovay complete} if every \lce real is Solovay reducible to it.
One way to interpret this theorem is that among \lce reals exactly the random ones converge as slowly as possible.

The most well-known example of a random \lce real is Chaitin's $\Omega$~\cite*{chaitin1975theory}, which is defined as the halting probability of a universal prefix-free Turing machine. In fact, it is known that every random \lce real is the halting probability of some universal prefix-free machine~\cite*{solovaydraft, SCALUDE2001125,downey2002randomness,downey2007undecidability}. As a consequence, all random \lce reals share many properties with $\Omega$, and accordingly such reals are called \emph{Omega numbers}.

Since computable approximations to a random \lce real converge so slowly, their speed of convergence can be set as a benchmark for the speed of convergence of other \lce reals. This phenomenon was further explored by Barmpalias and Lewis-Pye~\cite*{barmpaliasDifferencesHaltingProbabilities2017} and by Miller~\cite*{millerWorkBarmpaliasLewisPye2017}.

Fix a random \lce real $\Omega$ with a \lce approximation $\aset{\Omega_s}\siome$.
Given a \dce real $\alpha $ with a \dce approximation $\aset{\alpha_s}\siome $, let 
	\[	\partial\alpha = \partial\aset{\alpha_s} = \lim_{s\rightarrow \infty}\frac{\alpha - \alpha_s}{\Omega - \Omega_s}.	\]
For this notion of $\partial\alpha$, Barmpalias and Lewis-Pye~\cite*{barmpaliasDifferencesHaltingProbabilities2017} showed the following result.  
\begin{theorem}[Barmpalias and Lewis-Pye]\label{thm:nonranlcewd}
For any random \lce real $\alpha$ with a \lce approximation $\aset{\alpha_s}\siome $, the limit $\partial \alpha$ exists and is independent of the choice of the approximation $\aset{\alpha_s}\siome $.
\end{theorem}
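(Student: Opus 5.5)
The plan is to prove existence by a Martin-L\"of test argument and independence by an intrinsic characterization of the limit. Write $r_s=(\alpha-\alpha_s)/(\Omega-\Omega_s)$. We may assume, as in Remark~\ref{remark:dyadic-approximation}, that all $\alpha_s$ and $\Omega_s$ are rational and strictly below $\alpha$ and $\Omega$ respectively, so $r_s>0$ is well defined. Existence then reduces to ruling out oscillation: suppose there are rationals $p<q$ with $r_s<p$ for infinitely many $s$ and $r_s>q$ for infinitely many $s$, and derive that $\Omega$ is not random. Since $\Omega$ is fixed as random, this contradiction shows that $\lim_s r_s$ exists in $[0,\infty]$.

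To build the test, I use that along the approximation the increments of $\alpha$ and $\Omega$ determine the ratios. If $r_s<p$ and later $r_t>q$ with $s<t$, then the $\alpha$-mass still to come after $t$ is large relative to the $\Omega$-mass still to come, while between $s$ and $t$ relatively little $\alpha$-mass was added per unit of $\Omega$-mass. Concretely, I would enumerate a Solovay test as follows. Whenever the stages $s<t$ satisfy $\alpha_t-\alpha_s\le p'(\Omega_t-\Omega_s)$ for an intermediate rational $p<p'<q$, put the interval $[\Omega_t,\Omega_t+(\alpha_t-\alpha_s)/q']$ into the test, where also $p<q'<q$. The intervals can be chosen so that they are indexed by disjoint blocks of $\alpha$-increments. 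Then their total length is at most $(1/q')\sum(\alpha$-increments$)\le \alpha/q'<\infty$, which makes this a valid Solovay test. If $r_t>q$ occurs after such a block, then $\Omega-\Omega_t<(\alpha-\alpha_t)/q$, and $\Omega$ falls into the enumerated interval provided the $\alpha$-mass to come is accounted for by the $\alpha$-mass already seen. The bookkeeping, namely choosing the blocks greedily and using that $r_s<p$ recurs so that ``future'' $\alpha$-mass gets charged to blocks already enumerated, is where I expect the main technical obstacle. I would first try a dovetailed version in which each block is charged once and the parameters $p',q'$ absorb the error factor.

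For independence of the approximation, I would characterize the limit $L$ intrinsically. For rational $q>L$, eventually $\alpha-\alpha_s<q(\Omega-\Omega_s)$, and from this I would derive that $q\Omega-\alpha$ is a left-c.e.\ real: after modifying finitely many terms, $q\Omega_s-\alpha_s$ can be turned into a left-c.e.\ approximation of it, using the reindexing technique from the proof of $(*')$ via a computable order $f$. For rational $q<L$, symmetrically $\alpha-q\Omega$ is left-c.e. Hence
\[
L=\inf\{q\in\mathbb{Q}: q\Omega-\alpha \text{ is left-c.e.}\},
\]
and the right-hand side depends only on $\alpha$ and $\Omega$, not on the approximation of $\alpha$. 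To make this rigorous one needs the converse: if $q\Omega-\alpha$ is left-c.e.\ then $r_s\le q$ in the limit, for any approximation. I would get this by comparing with a left-c.e.\ approximation of $q\Omega-\alpha$ through $(*'')$-style reindexing, using that $\Omega$ is Solovay complete (Theorem~\ref{thm:solocomp}) so that ratios relative to $\Omega$ transfer between approximations up to the already established existence of the limit.
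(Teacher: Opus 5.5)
\textbf{The existence half has a genuine gap.} The paper gives no proof of this theorem; it only cites Barmpalias and Lewis-Pye. So your proposal has to stand on its own, and the existence part does not.

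Your interval $[\Omega_t,\Omega_t+(\alpha_t-\alpha_s)/q']$ contains $\Omega$ only if $\Omega-\Omega_t\le(\alpha_t-\alpha_s)/q'$. But $r_t>q$ only gives $\Omega-\Omega_t<(\alpha-\alpha_t)/q$, which is a bound in terms of the $\alpha$-mass arriving \emph{after} $t$. Nothing available at stage $t$ controls that quantity. Your side condition even forces the enumerated length to be at most $(p'/q')(\Omega_t-\Omega_s)$, which has no relation to $\Omega-\Omega_t$. So the ``bookkeeping'' you postpone is not a technicality; it is the whole existence claim.

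Correct covering inequalities always need a later good stage. For example, if $t<u$, $r_t>q$ and $r_u<p$, then $\alpha_u-\alpha_t>(q-p)(\Omega-\Omega_t)$. Good stages are not computably recognizable, so you would have to enumerate intervals for all pairs passing a computable side condition. You give no rule that keeps the total length finite while still catching $\Omega$ infinitely often.

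Here is what actually has to be shown. For rational $c$, $c\Omega_s-\alpha_s$ is a \dce approximation of $c\Omega-\alpha$, and
\[
(c\Omega-\alpha)-(c\Omega_s-\alpha_s)=(\Omega-\Omega_s)(c-r_s).
\]
If $r_s$ oscillates across $(p,q)$, this approximation is two-sided for every rational $c\in(p,q)$. By Rettinger's observation, $c\Omega-\alpha$ is then nonrandom for all these $c$. Conversely, existence of the limit follows once you know that $c\Omega-\alpha$ is nonrandom for at most one $c$. That ``at most one'' statement is a Demuth-type additivity fact: otherwise $(c_2-c_1)\Omega$ would be a difference of two nonrandom \dce reals. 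It is where the real work lies, and your sketch has nothing that replaces it.

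\textbf{Independence.} This part is right in outline, but the converse via $(*'')$ and Solovay completeness does not work as you describe it. Completeness only gives bounds up to an unspecified constant. Transferring exact ratios between approximations of $\Omega$ is the case $\alpha=\Omega$ of the theorem itself. The detour is also unnecessary.

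Suppose two approximations $\{a_s\}$ and $\{b_s\}$ of $\alpha$ had limits $L_a<L_b$, allowing $L_b=\infty$. Choose rationals $L_a<c<q<L_b$. Then $c\Omega_s-a_s$ is eventually below $c\Omega-\alpha$, and $q\Omega_s-b_s$ is eventually above $q\Omega-\alpha$. Taking running maxima and minima (no reindexing is needed), $c\Omega-\alpha$ and $\alpha-q\Omega$ are both left-c.e. Hence their sum $(c-q)\Omega$ is left-c.e., so $\Omega$ is right-c.e. and therefore computable, a contradiction. Finiteness of the common limit then follows from $(*'')$ together with Theorem~\ref{thm:solocomp}: these give one approximation of $\alpha$ along which $r_s$ is bounded.
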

Accordingly, $\partial \alpha$ is well-defined for any \lce real $\alpha$.
Miller~\cite*{millerWorkBarmpaliasLewisPye2017} observed that this result can be easily extended to all \dce reals, in the following nicer form.
\begin{theorem}[Barmpalias and Lewis-Pye, Miller]\label{thm:nonrandcezero}
For any \dce real $\alpha$,	$\partial \alpha$ is well-defined and $\partial \alpha = 0 $ if and only if $\alpha$ is not random.
\end{theorem}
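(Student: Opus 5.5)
The plan is to reduce the statement to Theorem~\ref{thm:nonranlcewd} by writing a \dce real as a difference of two \lce reals, and then to use what is known about $\partial$ on \lce reals together with closure properties of randomness under addition.

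\textbf{Well-definedness.} Let $\aset{\alpha_s}\siome$ be a \dce approximation of $\alpha$. Put $\beta_s = \alpha_0 + \sum_{t<s} \max(\alpha_{t+1}-\alpha_t,0)$ and $\gamma_s = \sum_{t<s}\max(\alpha_t-\alpha_{t+1},0)$. These are nondecreasing computable sequences. Bounded variation makes them converge, say to $\beta$ and $\gamma$, so they are \lce approximations. Moreover $\alpha_s = \beta_s - \gamma_s$ and $\alpha = \beta - \gamma$. A shift by a rational changes nothing, so after rescaling we may assume $\beta,\gamma\in(0,1)$. Then
\[
\frac{\alpha-\alpha_s}{\Omega-\Omega_s} = \frac{\beta-\beta_s}{\Omega-\Omega_s} - \frac{\gamma-\gamma_s}{\Omega-\Omega_s}.
\]
By Theorem~\ref{thm:nonranlcewd}, as extended to all \lce reals, both terms on the right converge to $\partial\beta$ and $\partial\gamma$. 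Hence the limit exists and equals $\partial\beta-\partial\gamma$.

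For independence of the approximation, take a second \dce approximation with decomposition $\beta',\gamma'$. Then $\beta+\gamma' = \beta'+\gamma$ as reals. The sums $\beta_s+\gamma'_s$ and $\beta'_s+\gamma_s$ are both \lce approximations of this common real, and $\partial$ is additive on \lce approximations (immediate from the definition). Independence for \lce reals then gives $\partial\beta+\partial\gamma' = \partial\beta'+\partial\gamma$. Rearranging, $\partial\beta-\partial\gamma=\partial\beta'-\partial\gamma'$.

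\textbf{Characterization.} I first need the \lce case: a \lce real $\delta$ has $\partial\delta=0$ iff it is not random.
\begin{itemize}
\item If $\delta$ is random, Theorem~\ref{thm:solocomp} gives $\Omega\le_S\delta$. Using condition $(*'')$ we obtain $\Omega-\Omega'_s\le c(\delta-\delta_s)$ along some reindexed approximation, which forces $\partial\delta\ge 1/c>0$.
\item Conversely, if $\partial\delta>0$, then $\Omega-\Omega_s\le c(\delta-\delta_s)$ for large $s$, so $\Omega\le_S\delta$. Since Solovay reducibility preserves randomness upward, $\delta$ is random.
\end{itemize}

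For the \dce case, suppose first that $\alpha$ is not random. For rational $q>0$ large enough, $\alpha+q\Omega = \beta-\gamma+q\Omega$ is a \lce real if we approximate it by $\alpha_s+q\Omega_s$ after speeding up $\Omega$. This is the main obstacle, since $\alpha_s$ may decrease. The fix I would try first is to note that $\gamma$ has $\partial\gamma$ finite, so for $q>\partial\gamma$ the approximation $\beta_s+q\Omega_s-\gamma_s$ can be thinned to a nondecreasing one after finitely many steps. Alternatively, I would sidestep the issue and use the known fact that the sum of a nonrandom \dce real and a random \lce real is random and \lce. Either way $\partial(\alpha+q\Omega)=\partial\alpha+q$. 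By the Demuth/Rettinger--Zheng dichotomy (a \dce real is random only if it is left- or right-c.e.), I would then argue as follows.
\begin{itemize}
\item If $\alpha$ is random, it is \lce or \rce. By symmetry under $\alpha\mapsto1-\alpha$ and the \lce case, $\partial\alpha\neq0$.
\item If $\alpha$ is not random, then $\beta$ and $\gamma$ cannot have different randomness status, since otherwise $\alpha=\beta-\gamma$ would be random (random plus left-c.e.\ is random). If both are nonrandom, $\partial\alpha=0-0$. If both are random, $\partial\beta=\partial\gamma$: otherwise $q\beta-q'\gamma$ for suitable rationals would have a positive $\partial$, making $\alpha$ Solovay-above $\Omega$ in the \dce sense and hence random.
\end{itemize}
This last step, where one must show that equal $\partial$ is forced by nonrandomness of the difference, is where I expect the real work. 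I would try to derive it from Miller's argument that $\partial\alpha\neq0$ implies a Solovay test covering fails.
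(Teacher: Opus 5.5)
The paper gives no proof of this theorem; it quotes it from Barmpalias and Lewis-Pye and from Miller, so your proposal has to stand on its own. Several parts are fine:
\begin{itemize}
\item the well-definedness argument: the decomposition $\alpha_s=\beta_s-\gamma_s$, additivity of $\partial$, and approximation-independence applied to the \lce real $\beta+\gamma'=\beta'+\gamma$;
\item your \lce case via Theorem~\ref{thm:solocomp} and $(*'')$;
\item the direction ``random $\Rightarrow\partial\alpha\neq0$'', using that a random \dce real is \lce or \rcen.
\end{itemize}

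The gap is the other direction, ``nonrandom $\Rightarrow\partial\alpha=0$'', which you never establish. The paragraph on $\alpha+q\Omega$ is not used afterwards. In the mixed case, the fact you cite does not apply. In either subcase $\alpha$ (or $-\alpha$) is the sum of a random \lce real and a \rce real, whereas ``random plus \lce is random'' concerns two \lce summands. For mixed sums the claim is false in general ($\beta+(-\beta)=0$), so the nonrandomness of the other summand would have to enter essentially. In the case where $\beta$ and $\gamma$ are both random, you explicitly leave open why $\partial\beta\neq\partial\gamma$ forces $\alpha$ to be random. There, `Solovay-above $\Omega$ in the \dce sense' is neither made precise nor shown to imply randomness. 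Every branch of your case analysis thus reduces to the same unproved claim: a \dce real with $\partial\alpha\neq0$ is random.

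That claim follows quickly from tools you already have. In fact you state the key observation yourself for $\alpha+q\Omega$: a nonzero $\partial$ forces the approximation to be eventually one-sided. Suppose $\partial\alpha>0$. Since $\Omega-\Omega_s>0$ for all $s$, convergence of $(\alpha-\alpha_s)/(\Omega-\Omega_s)$ to a positive limit gives $\alpha_s<\alpha$ for almost all $s$. Hence $\alpha$ is \lce by the running-maximum argument of Remark~\ref{remark:one-sided}. A \lce approximation $\aset{\alpha'_s}\siome$ of $\alpha$ is in particular \dcen, so by the independence you already proved, $\lim_{s}(\alpha-\alpha'_s)/(\Omega-\Omega_s)=\partial\alpha>0$. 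Your \lce case then shows that $\alpha$ is random. If $\partial\alpha<0$, apply the same argument to $1-\alpha$ with the approximation $\aset{1-\alpha_s}\siome$. Together with your proof of the other direction, this completes the theorem. It also makes both the case analysis on $\beta,\gamma$ and the detour through $\alpha+q\Omega$ unnecessary.
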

Theorem~\ref{thm:nonrandcezero} implies that all the \dce approximations to a random \dce real converge at exactly the same speed.

\subsection{Definition of speedability}
Given the previous discussion of comparing the convergence rates of approximations of two different \lce reals, it is natural to consider the convergence rates of different computable approximations of a single real, and the possibility of speeding up the convergence of a given computable approximation. This leads to the notion of \emph{speedability} of a computable approximation.
\begin{definition}\label{def:rho-speedable}
Let $\aset{a_s}\siome $ be a computable approximation that converges to a real~$\alpha$ and let~$\rho$ be some rational in $[0,1)$. The approximation $\aset{a_s}\siome $ is {\em $\rho$-speedable} if either~$\alpha$ is rational or, if~$\alpha$ is not rational, if there is a computable order~$f$ such that
\begin{equation}\label{delta-2-speedability-property}
\liminf_{s\to\infty}\abs*{\frac{\alpha-a_{f(s)}}{\alpha-a_s}} \le \rho.
\end{equation}
The approximation $\aset{a_s}\siome $ is {\em speedable} if it is $\rho$-speedable for some $\rho\in [0,1)$.
\end{definition}
Note that a computable approximation cannot be $\rho$-speedable for negative~$\rho$ and is always $\rho$-speedable for~$\rho\ge 1$, hence, in connection with $\rho$-speedability, only values of~$\rho$ in the interval~$[0,1)$ are relevant. For \lce approximations, as long as~$\rho$ is nonzero, the actual choice of~$\rho$ within the latter interval does not matter by the following result of Merkle and Titov~\cite[Lemma 8]{merkleSpeedableLeftcNumbers}.
\begin{lemma}[Merkle and Titov]\label{lem:rhoindef}
If a \lce approximation is $\rho$-speedable for some $\rho < 1$, then it is $\rho$-speedable for all $\rho\in (0,1)$.
\end{lemma}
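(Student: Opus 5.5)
Since $\rho$-speedability for $\rho'\ge\rho$ follows directly from the definition, it suffices to show: if a \lce approximation $\aset{a_s}\siome$ of an irrational $\alpha$ is $\rho$-speedable via a computable order $f$ for some $\rho<1$, then for every rational $\rho'\in(0,1)$ there is a computable order $g$ with $\liminf_s (\alpha-a_{g(s)})/(\alpha-a_s)\le\rho'$. (For rational $\alpha$ there is nothing to show.) The plan is to iterate $f$. Because the approximation is nondecreasing, all ratios involved lie in $(0,1]$, which makes composition behave well. One may also replace $f$ by $\max(f(s),s)$, which keeps it a computable order and does not increase the ratio. So assume $f(s)\ge s$.

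Fix $\rho<\rho''<1$. Then $(\alpha-a_{f(s)})/(\alpha-a_s)<\rho''$ for infinitely many $s$. Choose $k$ with $(\rho'')^k\le\rho'$. The naive attempt is $g=f^{(k)}$, using
\[\frac{\alpha-a_{f^{(k)}(s)}}{\alpha-a_s}=\prod_{i<k}\frac{\alpha-a_{f^{(i+1)}(s)}}{\alpha-a_{f^{(i)}(s)}}.\]
The obstacle is that the good stages $s$ for $f$ need not be closed under applying $f$: each factor is only bounded by $1$, not by $\rho''$. So this product argument fails as stated.

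The real work is therefore the main step: amplifying the single contraction factor $\rho''$ into an arbitrarily small one. Instead of iterating at fixed stages, I would exploit the left-c.e.\ structure. Define $g(s)$ as the least stage $t$ such that the approximation has ``caught up'' $k$ times, in the following sense. Set $t_0=s$, and let $t_{i+1}=f(u_i)$, where $u_i$ is the largest stage with $f(u_i)\le$ (some computable bound) and $a_{u_i}\le a_{t_i}$. Search is done on increments. The intended effect is that the gap $\alpha-a_{t_i}$ is compared with a good stage of $f$ lying in the interval where $a$ passes $a_{t_i}$.

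I expect this step to be the hard one, and I would first try the following concrete version. Consider the set $G$ of good stages, where the ratio is $<\rho''$. For a good $u$, any $s\le u$ with $a_s=a_u$ inherits the bound. In general, for $s\le u$, left-c.e.-ness gives
\[\alpha-a_{f(u)}\le\rho''(\alpha-a_u)\le\rho''(\alpha-a_s).\]
So if $g(s)\ge f(h(s))$ for an order $h$ with $h(s)\ge s$ that lands on good stages often, we win one factor.

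To get $k$ factors, I would aim to show a dichotomy. Either, infinitely often, $k$ good stages $u_1<\dots<u_k$ occur with $f(u_i)\le u_{i+1}$ and $u_1\ge s$ close enough to be bounded computably in $s$, and then $g(s)=f^{(k)}(\text{bound})$ yields $(\rho'')^k$. Or good stages are sparse relative to $f$. In the sparse case, a direct construction shows that the ratio at the good stages can be improved by replacing $f$ with a faster order $f^{(j)}$. This uses the fact that between consecutive good stages the approximation moves little, which would contradict irrationality or convergence.

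The uniform choice of the computable bound is where I expect the proof to demand the most care. Fallback: prove the equivalence of $\rho$-speedability for some $\rho$ with a fixed $\rho$-independent characterization, such as a martingale-type or a ``$\sup_s (a_{s+1}-a_s)/(\alpha-a_s)=1$'' style condition, from which every $\rho'>0$ can be obtained directly.
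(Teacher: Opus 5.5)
\textbf{There is a gap: the amplification step is never actually carried out, and the proposed dichotomy does not work.}

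Your preliminary reductions are fine. You also diagnose correctly why the naive iterate $f^{(k)}$ fails, and your observation that a good $u\ge s$ gives $\alpha-a_{f(u)}\le\rho''(\alpha-a_s)$ is right. But the amplification step is the whole content of the lemma.

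\emph{Why good stages cannot be chained.} In the non-strict sense, $u$ is good iff $\alpha-a_{f(u)}\le\rho''(\alpha-a_u)$. This is equivalent to the upper bound $\alpha\le a_u+(a_{f(u)}-a_u)/(1-\rho'')$. A finite stage can refute this but never confirm it, so good stages cannot in general be located computably.

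\emph{Dense case.} Nothing supplies the ``computable bound'' this case needs. Moreover, you would need computable control of every gap $u_{i+1}-f(u_i)$, not only of $u_1$.

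\emph{Sparse case.} The claim that the approximation moves little between consecutive good stages is unfounded. A non-good stage only means the ratio over one $f$-step exceeds $\rho''$. Many such steps can together cover almost all of the remaining distance, with no conflict with irrationality or convergence. Replacing $f$ by $f^{(j)}$ then runs into exactly the obstacle you identified yourself.

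\emph{The construction of $t_{i+1}$.} Defining it via an unspecified ``computable bound'' is not a definition. As written, there is no proof.

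\textbf{The missing idea: no chaining is needed.} The paper does not reprove this lemma; it cites Merkle and Titov and remarks that their argument uses monotonicity essentially. The idea you are missing is that the observable increment $a_{f(s)}-a_s$ turns the hypothesis into a candidate upper bound for $\alpha$, and one search per stage suffices.

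Let $r(s)=(\alpha-a_{f(s)})/(\alpha-a_s)$ and $r^*=\liminf_s r(s)<1$. If $r^*=0$ you are done. Otherwise fix, non-uniformly, a rational $0\le\rho_0<r^*$.

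Since $r(s)>\rho_0$ is equivalent to $\alpha>q_s:=a_s+(a_{f(s)}-a_s)/(1-\rho_0)$, this holds for all $s\ge s_0$. Hence $g(s):=\min\{t\ge f(s): a_t\ge q_s\}$ is computable for $s\ge s_0$. Set $g(s)=f(s)$ for $s<s_0$ and take running maxima to get a computable order; by monotonicity this only improves the ratios. Choosing $\rho_0$ strictly below the liminf is exactly what makes the search total.

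Using $a_{f(s)}-a_s=(1-r(s))(\alpha-a_s)$, you get for $s\ge s_0$
\[
\frac{\alpha-a_{g(s)}}{\alpha-a_s}\le\frac{\alpha-q_s}{\alpha-a_s}=1-\frac{1-r(s)}{1-\rho_0}=\frac{r(s)-\rho_0}{1-\rho_0}.
\]
Since $r(s)<r^*+\varepsilon$ for infinitely many $s$, the liminf for $g$ is at most $(r^*+\varepsilon-\rho_0)/(1-\rho_0)$. This is below any prescribed $\rho'>0$ once $\rho_0$ is close enough to $r^*$ and $\varepsilon$ is small.
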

\begin{remark}
The proof in Merkle and Titov~\cite*{merkleSpeedableLeftcNumbers} only works for \lce approximations, as it uses the monotonicity of the approximation in an essential way. However, by Theorem~\ref{thm:equidce} an assertion similar to Lemma~\ref{lem:rhoindef} holds for \dce approximations: if a \dce approximation is $\rho$-speedable for some $\rho <1$, then there exists a 0-speedable \dce approximation of the same real. The latter assertion remains true when both occurrences of \dce are replaced by \can; we will, however, show the stronger result that all \ca reals have a 0-speedable \ca approximation.
\end{remark}
Every c.a.\ real has a 0-speedable \ca approximation, but in case the real is random, it cannot have a speedable \lce approximation. This suggests investigating the speedability of different types of computable approximations of the various types of c.a.\ reals. 
\begin{definition}\label{def:realspd}
Let $\alpha$ be a real, and $\rho$ be some rational.
Let $\Xs$ stand for one of the terms \lcen, \rcen, \dcen, or \can
\begin{itemize}
	\item The real~$\alpha$ is {\em $\Xs$ ($\rho$-)speedable} if it has a ($\rho$-)speedable $\Xs$ approximation.
	\item  The real~$\alpha$ is {\em strongly $\Xs$ ($\rho$-)speedable} if it has an $\Xs$ approximation that is ($\rho$-)speedable via the function $s \mapsto s+1$.
	\item  The real~$\alpha$ is {\em fully $\Xs$ speedable} if it is $\Xs$ and all of its $\Xs$ approximations are speedable.
	\item  The real~$\alpha$ is {\em weakly $\Xs$ speedable} if there exist a rational $\rho\in (0,1)$ and two $\Xs$ approximations~$\aset{a_s}\siome $ and $\aset{b_s}\siome $ of $\alpha$ such that 
	\begin{equation}
	\liminf_{s\to\infty}\abs*{\frac{\alpha-b_s}{\alpha-a_s}} \le \rho.
	\end{equation}
\end{itemize}
\end{definition}
For all $\Xs \in \aset{\text{\lcen}, \text{ \rcen}, \text{ \dcen}, \text{ \can}}$, the following hold. 
\begin{flalign}
&& \label{eq:spdimplications3} \text{$\Xs$ 0-speedability} &&   \Rightarrow  &&& \text{$\Xs$ speedability}; & \\
&&\label{eq:spdimplications1} \text{strong $\Xs$ speedability} &&   \Rightarrow  &&& \text{$\Xs$ speedability}; & \\
&&\label{eq:spdimplications2} \text{full $\Xs$ speedability} &&  \Rightarrow  &&& \text{$\Xs$ speedability}; & \\
&&\label{eq:spdimplications4} \text{$\Xs$ speedability} &&   \Rightarrow  &&& \text{weak $\Xs$ speedability}. & 
\end{flalign}
The first three implications hold by definition. For a proof of \eqref{eq:spdimplications4}, let~$\alpha$ be a real that has an $\Xs$ approximation~$\aset{a_s}\siome$ that is speedable via some computable order~$f$. Then~$\aset{a_{f(s)}}\siome$ is an $\Xs$ approximation of~$\alpha$ by Remark~\ref{remark:dyadic-approximation}, and the approximations~$\aset{a_s}\siome$ and $\aset{a_{f(s)}}\siome$ witness that~$\alpha$ is weakly $\Xs$ speedable.

\subsection{Left-c.e.\ speedability}\label{subsec:leftce-speedability}
For \lce in place of $\Xs$, the implications in \eqref{eq:spdimplications1}, \eqref{eq:spdimplications2}, and \eqref{eq:spdimplications4} can be reversed.
\begin{theorem}[Merkle and Titov]\label{thm:equilce}
For a \lce real $\gamma$, the following are equivalent.
\begin{enumerate}
\item[(i)] $\gamma$ is \lce speedable.
\item[(ii)] $\gamma$ is strongly \lce $\rho$-speedable for all $\rho\in (0,1)$.
\item[(iii)] $\gamma$ is fully \lce speedable.
\item[(iv)] $\gamma$ is weakly \lce speedable.
\end{enumerate}
\end{theorem}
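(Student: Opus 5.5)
Since (ii)$\Rightarrow$(i) is immediate, and (iii)$\Rightarrow$(i) and (i)$\Rightarrow$(iv) are the implications \eqref{eq:spdimplications2} and \eqref{eq:spdimplications4}, the plan is to prove (iv)$\Rightarrow$(iii) and (iii)$\Rightarrow$(ii). We may assume $\gamma$ is irrational, because for rational $\gamma$ every approximation is speedable by definition, and (ii) is handled trivially in that case.

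\emph{Step (iv)$\Rightarrow$(iii).} Let $\aset{a_s}\siome$ and $\aset{b_s}\siome$ witness weak speedability with some $\rho<1$, and let $\aset{c_s}\siome$ be an arbitrary \lce approximation of $\gamma$. We transfer the speed-up to $\aset{c_s}$ by reindexing. Let $g(s)$ be the largest $t$ with $a_t\le c_s$, and let $h(t)$ be the least $u$ with $c_u\ge b_t$. Both are computable orders because $\gamma$ is irrational, so all approximants lie strictly below $\gamma$. Set $f(s)=h(g(s)+1)$. Then we get
\[
\gamma-c_{f(s)}\le \gamma-b_{g(s)+1}
\quad\text{and}\quad
\gamma-c_s\ge\gamma-a_{g(s)+1}.
\]
The difficulty is that the liminf for $b$ versus $a$ is attained along indices $t$ that need not be of the form $g(s)+1$, since $g$ can jump. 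To deal with this, pick for each $t$ the least $s$ with $g(s)+1\ge t$. At that $s$ we have $a_{g(s)}\le c_s$ and, by minimality, $c_{s-1}<a_{t}$-type bounds, which should give that $\gamma-c_s$ is comparable to $\gamma-a_{t-1}$. Since only a constant factor can be lost this way, we plan to first upgrade the weak speedability ratio from $\rho$ to an arbitrarily small $\rho'$ by iterating. The point is that if $b$ beats $a$ by a factor $\rho$, then the reindexing composed $k$ times should beat $a$ by $\rho^k$; this is done via Lemma~\ref{lem:rhoindef}-style arguments applied to $\aset{a_s}$ sped up by $t\mapsto$ least $u$ with $a_u\ge b_t$. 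Once the ratio is small enough to absorb the constant loss, the resulting $\liminf$ for $\aset{c_s}$ is $<1$. This interleaving/index-mismatch estimate is where I expect the main obstacle: in general it must be handled by choosing $s$ among the finitely many indices where $c$ crosses $a_t$ and comparing consecutive gaps.

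\emph{Step (iii)$\Rightarrow$(ii).} Fix $\rho\in(0,1)$ and take any \lce approximation $\aset{a_s}$. By (iii) it is speedable, and by Lemma~\ref{lem:rhoindef} it is $\rho$-speedable via some computable order $f$, where we may assume $f(s)>s$ and $f$ is strictly increasing. Define the subsequence $s_0=0$ and $s_{n+1}=f(s_n)$, and set $c_n=a_{s_n}$. This is a \lce approximation, but the liminf of $f$ is attained at arbitrary $s$, not just at the $s_n$. To fix this, interpolate. Between $s_n$ and $s_{n+1}$ the approximation $a$ moves, and an $s$ with $s_n\le s<s_{n+1}$ at which the ratio is small has $f(s)\ge s_{n+1}$, so $\gamma-c_{n+2}\le\gamma-a_{f(s)}\le\rho(\gamma-a_s)\le\rho(\gamma-c_n)$. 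This yields a two-step speed-up, and taking the subsequence $n\mapsto c_{2n}$ (or using Lemma~\ref{lem:rhoindef} with $\sqrt\rho$ in place of $\rho$, then merging) should give a single-step speed-up with ratio $\le\rho$ infinitely often. Hence $\gamma$ is strongly \lce $\rho$-speedable for every $\rho\in(0,1)$.
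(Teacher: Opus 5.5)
Your chain of implications is logically fine, and the (iii)$\Rightarrow$(ii) step is essentially sound. However, (iv)$\Rightarrow$(iii) has a genuine gap at exactly the point you flagged.

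Call an index $t$ good if $\gamma-b_t\le\rho(\gamma-a_t)$. For the least $s$ with $g(s)+1\ge t$, all you get is $c_{s-1}<a_{t-1}\le c_s$, hence $\gamma-c_s\le\gamma-a_{t-1}$. That is the wrong direction. In a single step, $c$ may jump from below $a_{t-1}$ past $a_{t+k}$ for arbitrarily large $k$, landing arbitrarily close to $\gamma$ relative to $\gamma-a_t$. So there is no constant-factor comparability, and shrinking $\rho$ first (by iteration or by Lemma~\ref{lem:rhoindef}) cannot absorb the loss.

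With your $f(s)=h(g(s)+1)$, the good index $t$ is then never used. The value $f(s)$ is governed by $b_{g(s)+1}$, where $g(s)+1$ lies far beyond $t$ and $b$ need not beat $a$. The value $f(s-1)$ is governed by $b_{g(s-1)+1}$, where $g(s-1)+1<t$. If $b$ beats $a$ only at isolated indices and $c$ jumps over each of them, your $f$ need not be a speed-up at all.

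The missing idea is to take as base the last $c$-index whose value is at most $a_t$. The speed-up from that index must then reach $b_t$ for every $t$ in the finite block of $a$-indices lying below the next $c$-value. Since $b$ is nondecreasing, this block has a single target. Put $F(n)=h(g(n+1))$. For a good $t$ with $a_t\ge c_0$, let $n$ be maximal with $c_n\le a_t$. Then $a_t<c_{n+1}$ gives $t\le g(n+1)$, so
\[
\gamma-c_{F(n)}\le\gamma-b_{g(n+1)}\le\gamma-b_t\le\rho(\gamma-a_t)\le\rho(\gamma-c_n).
\]
Each $n$ arises from only finitely many $t$, so $\{c_s\}$ is $\rho$-speedable via the computable order $F$, with the same $\rho$ and no iteration needed.

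For comparison, the paper does not prove (i)$\Leftrightarrow$(ii)$\Leftrightarrow$(iii); it cites Merkle and Titov for these. It handles (iv) only through (iv)$\Rightarrow$(i), speeding up $\{a_s\}$ itself via $n\mapsto$ the least $m$ with $a_m\ge b_n$. Once repaired, your route proves independence from the chosen approximation directly.

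In (iii)$\Rightarrow$(ii), two small corrections are needed. Passing to $n\mapsto c_{2n}$ fails when the good $n$ are odd. Also, $\sqrt\rho$ goes in the wrong direction. Instead, apply Lemma~\ref{lem:rhoindef} with some $\rho'<\rho^2$. Then $\gamma-c_{n+2}<\rho^2(\gamma-c_n)$ holds infinitely often, which forces $\gamma-c_{m+1}<\rho(\gamma-c_m)$ for some $m\in\{n,n+1\}$.
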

\begin{remark}\label{remark:lcewspd}
The equivalence of~(i), (ii), and~(iii) was shown by~Merkle and Titov~\cite*{merkleSpeedableLeftcNumbers}. Concerning~(iv), it follows as a special case of~\eqref{eq:spdimplications4} that \lce speedability implies weak \lce speedability. For a proof of the reverse implication, we can assume that $\alpha$ is not rational. Suppose~$\alpha$ satisfies the definition of weakly \lce $\rho$-speedable via two \lce approximations~$\aset{a_s}\siome $ and $\aset{b_s}\siome $. We define a function $f\colon \Nat\to \Nat$ by letting $f(n)$ be the least number such that $a_{f(n)}\ge b_n $. It is easy to check that $f$ is a well-defined computable order such that $\alpha$ is $\rho$-speedable via $f$.
\end{remark}

Theorem \ref{thm:nonranlcewd} implies that  \lce random reals cannot be \lce speedable. Following the latter observation, Merkle and Titov~\cite*{merkleSpeedableLeftcNumbers} asked whether this yields a characterization of the random \lce reals.
\begin{question}\label{ques:nonrandspd}
Are all nonrandom \lce reals \lce speedable?
\end{question}
This question was later answered in the negative by Hölzl and Janicki~\cite{holzlBenignApproximationsNonspeedability2023}.

By Lemma~\ref{lem:rhoindef}, every speedable \lce approximation is $\rho$-speedable for all~$\rho>0$.
It is then natural to ask whether this inclusion can be extended to the case~$\rho=0$, \ien, whether \eqref{eq:spdimplications3} can be reversed for \lce in place of $\Xs$.
\begin{question}\label{ques:speedzerospd}
Are all \lce speedable reals \lce 0-speedable?
\end{question}
This question was also answered in the negative by Hölzl, Janicki, Merkle, and Stephan~\cite*{holzlRandomnessSuperspeedability2024}.

\subsection{Our results and outline of the paper}
The main goal of this paper is to study the speedability notions in Definition~\ref{def:realspd} and their relation with randomness. Note that given a \rce approximation $\aset{a_s}\siome $ of a real $\alpha$, we can consider the \lce approximation $\aset{1-a_s}\siome$ and its limit~$1-\alpha$.
Consequently, \rce speedability is fully symmetric to \lce speedability, and all results about \lce speedability can be translated to \rce speedability. In particular, Theorem~\ref{thm:equilce} holds for \rce in place of \lce as well.

In Section~\ref{sec:speedupdce}, we will show that for \dce in place of $\Xs$, the implications~\eqref{eq:spdimplications3}, \eqref{eq:spdimplications1} and~\eqref{eq:spdimplications4} can be reversed. Moreover, we will prove that a \dce real is \dce speedable if and only if it is nonrandom. Thus, Questions \ref{ques:nonrandspd} and \ref{ques:speedzerospd} both have positive answers in the \dce case.

By the negative answer to Question~\ref{ques:nonrandspd} by Hölzl and Janicki~\cite{holzlBenignApproximationsNonspeedability2023}, there is a nonrandom \lce real that is not \lce speedable. The latter real, on the one hand, it is not fully \dce speedable, since it has a \lce approximation which is not speedable and all \lce approximations are also \dce approximations; on the other hand, it is \dce speedable by our result, since it is nonrandom.
Thus, the reverse of the implication \eqref{eq:spdimplications2} does not hold for \dce in place of $\Xs$. However, as shown in Section~\ref{sec:fuddcespd}, the reverse implication holds when restricting attention to \dce reals that are neither \lce nor \rce

In Section~\ref{sec:caspd}, we study the speedability of \ca reals. We first prove that every \ca real is speedable. With a more involved argument, we then show that every \ca real is strongly \ca 0-speedable.

Though Question~\ref{ques:speedzerospd} has a negative answer, the following question is still of interest.
\begin{question}\label{quz:lceran}
Which \lce reals exactly are speedable?
\end{question}
We would like to obtain a characterization of the \lce speedable reals among the \lce reals by some randomness notion. This would lead to a characterization of the fully \dce speedable reals among the \dce reals as well. In Section~\ref{sec:speedability-and-randomness}, as a step toward answering Question~\ref{quz:lceran}, we characterize the \lce speedable reals as the reals that can be covered by suitably restricted Solovay tests.

\subsection{Notation}
For standard notation and definitions in computability and randomness, we refer to the monograph by Downey and Hirschfeldt~\cite{rodenisbook}. In the remainder of this section, we recall some notation specific to this article and adopt some conventions.

The set of natural numbers, which includes~$0$, is usually denoted by~$\Nat$, and is denoted by~$\omega$ when used in subscripts. A real with binary expansion~$0.b_0 b_1 b_2 \ldots$ is identified with the infinite binary sequence~$b_0\concat b_1\concat b_2 \cdots$. For a dyadic rational, it is always assumed that its binary expansion ends with infinitely many zeros.

An approximation $\aset{a_s}\siome $ of a real~$\alpha$ is \emph{two-sided} if there are infinitely many $s$ such that $a_s < \alpha$ and infinitely many $s$ such that $a_s > \alpha$. Such an approximation is \emph{left-sided} if for all but finitely many~$s$, $a_s \le \alpha$, and it is \emph{right-sided} if for all but finitely many~$s$, $a_s \ge \alpha$.

\begin{remark}\label{remark:one-sided}
For a real~$\alpha$ that is neither \lce nor \rcen, all its computable approximations must be two-sided. For a proof by contradiction, assume that the real has a left-sided computable approximation~$\aset{a_s}\siome$, and fix an index~$t$ such that~$a_s \le \alpha$ for all~$s \ge t$.
Letting~$b_s$ be the maximum of~$a_{t}, \ldots, a_{t+s}$ yields the \lce approximation~$\aset{b_s}\siome$ of~$\alpha$, contrary to assumption. By a symmetric argument, $\alpha$ cannot have a right-sided computable approximation, either.
\end{remark}

The term \emph{string} always refers to a finite binary string. For a string~$\sigma$, its \emph{length} is denoted by~$|\sigma|$.
Given two strings~$\sigma$ and~$\tau$, we write $\sigma \prec \tau$, in case $\sigma$ is a proper prefix of $\tau$. For a set $A$ of strings, we say $A$ is \emph{prefix-free} if no string in $A$ is a prefix of another string in $A$.

For a string~$\sigma$, let $\extclass{\sigma}$ denote the set of all infinite binary sequences that extend $\sigma$. For a set $E$ of strings, we define $\extclass{E} = \bigcup_{\sigma \in E} \extclass{\sigma}$. Let $\mu$ denote the Lebesgue measure. For simplicity, we write $\mu(\extclass{E})$ as $\mu(E)$.

We identify a \emph{\ml test} with a uniformly computable sequence~$\aset{U_i}\iiome$ of prefix-free sets of strings such that $\mu(U_i) \le 2^{-i}$ for all~$i$.
A real~$\gamma$ is \emph{random} if there is no \ml test~$\aset{U_i}\iiome$ such that~$\gamma \in \bigcap_i \extclass{U_i}$.

We also identify a \emph{Solovay test} with a sequence~$\aset{[\ell_i, r_i]}\iiome$ of nonempty intervals such that $\aset{\ell_i}\iiome$ and~$\aset{r_i}\iiome$ are computable sequences of dyadic rationals in $[0,1]$ and the sum over the terms~$r_i - \ell_i$ converges. It is known that a real~$\gamma$ is not random if and only if there is such a Solovay test such that~$\gamma \in [\ell_i, r_i]$ for infinitely many~$i$, and in this situation we say the real~$\gamma$ is covered by or fails the Solovay test.

\section{D.c.e.\ speedability}\label{sec:speedupdce}
\subsection{D.c.e.\ speedability and randomness}
\begin{theorem}\label{thm:equidce}
For a \dce real $\gamma$, the following are equivalent.
\begin{enumerate}
	\item[(i)] \label{itm:dcessp} $\gamma$ is strongly \dce 0-speedable.
	\item[(ii)] \label{itm:dcesp} $\gamma$ is \dce speedable.
	\item[(iii)] \label{itm:dcewsp} $\gamma$ is weakly \dce speedable.
	\item[(iv)] \label{itm:dceran} $\gamma$ is nonrandom.
\end{enumerate}
\end{theorem}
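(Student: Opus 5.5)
We prove the cycle (i)$\Rightarrow$(ii)$\Rightarrow$(iii)$\Rightarrow$(iv)$\Rightarrow$(i).

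\textbf{The easy implications.} The implication (i)$\Rightarrow$(ii) combines \eqref{eq:spdimplications1} and \eqref{eq:spdimplications3}, and (ii)$\Rightarrow$(iii) is \eqref{eq:spdimplications4}.

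\textbf{(iii)$\Rightarrow$(iv).} We argue by contraposition using Theorem~\ref{thm:nonrandcezero}. Suppose $\gamma$ is random, so in particular irrational. Let $\aset{a_s}\siome$ and $\aset{b_s}\siome$ be any two \dce approximations of~$\gamma$. Theorem~\ref{thm:nonrandcezero} says that $\partial\gamma$ is well defined, that is, independent of the approximation, and nonzero. Hence both ratios $(\gamma-a_s)/(\Omega-\Omega_s)$ and $(\gamma-b_s)/(\Omega-\Omega_s)$ converge to the same value $\partial\gamma\neq 0$. Dividing, $(\gamma-b_s)/(\gamma-a_s)\to 1$. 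This holds for all large $s$, where $\gamma-a_s\neq 0$ because both numerators are eventually nonzero. So the $\liminf$ of the absolute ratio equals $1>\rho$ for every $\rho<1$, and $\gamma$ is not weakly \dce speedable.

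\textbf{(iv)$\Rightarrow$(i), the main step.} Let $\gamma$ be nonrandom and irrational; the rational case is trivial. We must build a \dce approximation $\aset{c_s}\siome$ with
\[
\liminf_s \abs{(\gamma-c_{s+1})/(\gamma-c_s)}=0 .
\]

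First route, via a Solovay test. Fix a Solovay test $\aset{[\ell_i,r_i]}\iiome$ covering $\gamma$ infinitely often. Since $\gamma$ is irrational, we may shrink intervals or refine the test so that $\sum_i (r_i-\ell_i)<\infty$ still holds and infinitely many $i$ have $\gamma$ in the interior. Start from a \dce approximation $\aset{a_s}\siome$ of $\gamma$. Build $c$ by interleaving. At stage $s$, for each newly enumerated index $i$ with $a_s\in[\ell_i,r_i]$, insert two extra terms: first a point outside the interval at distance comparable to $r_i-\ell_i$, then a point inside it, for instance the midpoint or the endpoint closest to $a_s$. The extra variation contributed by interval $i$ is $O(r_i-\ell_i)$, plus $O(|a_s-\ell_i|)$, which we must charge to the variation of $a$. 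So the total variation stays finite.

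The obstacle in this route is that proximity alone does not make the ratio small. We need a term far from $\gamma$ (distance $\gtrsim$ some fixed quantity) followed immediately by a term very close to $\gamma$. Interval-based jumps only give closeness $r_i-\ell_i$, and the "far" term must be much farther than that. To fix this, use nested tests: replace the test by one where each interval of level $k$ has length at most $2^{-k}$ times the length of a surrounding interval covering $\gamma$. Concretely, take a \ml test $\aset{U_k}$ with $\gamma\in\bigcap_k\extclass{U_k}$, made nested. Place a term at the boundary of a level-$k$ cylinder, then jump to inside a level-$k^2$ subcylinder. A cylinder of length $2^{-n}$ contains $\gamma$, but $\gamma$ may sit near its boundary, and then the "far" point is not really far. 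Handle this by choosing far points at distance about the parent cylinder length, which is much larger than the child length. The cost of these excursions over level $k$ is about $\mu(U_k)\cdot O(1)$, which is summable.

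Preferred alternative route, via Theorem~\ref{thm:nonrandcezero}. Nonrandomness gives $\partial\gamma=0$, which is strong information to build on. Write $\gamma=\beta-\delta$ with $\beta,\delta$ left-c.e.\ and approximations $\beta_s,\delta_s$. Since $\partial\gamma=0$, we have $\gamma-a_s=o(\Omega-\Omega_s)$. We then plan a one-step jump:
\begin{enumerate}
\item Interleave the approximation $a_s$ with terms $a_s\pm(\Omega_{t}-\Omega_s)$ for a sparse set of stages.
\item The far term lies at distance about $\Omega-\Omega_s$, the next term $a_{t}$ is close to $\gamma$ with error $o(\Omega-\Omega_t)$, and we choose $t$ such that $\Omega-\Omega_t\le\epsilon(\Omega-\Omega_s)$.
\end{enumerate}
The difficulty is choosing $t$ computably; this is handled by trying $t$ with $\Omega_t-\Omega_s\ge(1-\epsilon)(\Omega_{\text{later}}-\Omega_s)$, and inserting excursions only when the additional variation $\Omega_t-\Omega_s$ can be charged disjointly. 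Disjoint charging bounds the total by $\sum_s 2(\Omega_{t_{s}}-\Omega_s)\le 2\Omega$, so the approximation remains \dce. The ratio of the inserted pair then tends to $0$ along the chosen stages, because $\partial\gamma=0$. I expect this computable stage selection together with the bounded-variation bookkeeping to be the main obstacle.
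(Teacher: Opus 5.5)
Your arguments for (i)$\Rightarrow$(ii)$\Rightarrow$(iii) and for (iii)$\Rightarrow$(iv) are correct and are the paper's; the latter is exactly the first proof of Lemma~\ref{lem:randce}. The gap is in (iv)$\Rightarrow$(i), the substantive direction. There you offer two sketches and no construction, and neither works as stated.

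Your preferred route is not just missing bookkeeping; it is blocked. Since $\partial\gamma=0$ gives only $|\gamma-a_t|=o(\Omega-\Omega_t)$ with no rate, you need, as you say, a computable selection of infinitely many pairs $s<t$ with $\Omega-\Omega_t\le\epsilon(\Omega-\Omega_s)$. But then $f(n)=\max(\{n\}\cup\{t : (s,t)\text{ selected},\ s\le n\})$ is a computable order. It satisfies $(\Omega-\Omega_{f(s)})/(\Omega-\Omega_s)\le\epsilon$ for infinitely many $s$. So $\{\Omega_s\}$ would be a speedable left-c.e.\ approximation of a random left-c.e.\ real, which Theorem~\ref{thm:nonranlcewd} rules out. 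Testing $\Omega_t-\Omega_s\ge(1-\epsilon)(\Omega_u-\Omega_s)$ at a later finite stage $u$ gives no bound on $\Omega-\Omega_t$, so it does not escape this obstruction.

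Your first route (a Martin-L\"of test, with a far term followed by a near term) points in the right direction, but it rests on two unjustified claims.
\begin{enumerate}
\item In a nested test nothing makes the level-$k^2$ string containing $\gamma$ much shorter than the level-$k$ string containing $\gamma$. The bound $\mu(U_{k^2})\le 2^{-k^2}$ is global, so $U_{k^2}$ may contain $\sigma0$ for a long $\sigma\in U_k$. A far point at parent scale then gives a ratio near $1/2$, not tending to $0$.
\item You cannot tell which child is true, so parent-scale excursions are incurred for every child the approximation visits. Their number per parent is unbounded, so the cost is not $O(\mu(U_k))$.
\end{enumerate}

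The missing idea, which the paper uses, is to forget parents and tie the size of each excursion to the level of the single string it uses. Make the $U_i$ pairwise disjoint and enumerate the strings of all $U_{2k}$. Whenever the given d.c.e.\ approximation $r_j$ (with $j$ strictly increasing along the construction) has an unused string $\sigma\in U_{2k}$ as a prefix, output $r_j+2^{k-|\sigma|}$ followed by $r_j$, and mark $\sigma$ used. The added variation is at most $\sum_k 2^k\mu(U_{2k})\le\sum_k 2^{-k}$. Every string of the test that is a prefix of $\gamma$ is eventually used, and at such a stage the ratio is at most $1/(2^k-1)$. Prefix-freeness of each $U_{2k}$ makes these $k$ pairwise distinct, so the $\liminf$ via $s\mapsto s+1$ is $0$.
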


\begin{proof}
The chain of implications (i) $\Rightarrow$ (ii) $\Rightarrow$ (iii) follows by~\eqref{eq:spdimplications1} and~\eqref{eq:spdimplications4}, while
the implications (iii) $\Rightarrow$ (iv)  and (iv) $\Rightarrow$ (i) are precisely the statements of Lemmas~\ref*{lem:randce} and~\ref*{lem:nonrandce}, respectively.
\end{proof}

\begin{lemma}\label{lem:randce}
	If $\gamma$ is a random \dce real, then $\gamma$ is not weakly \dce speedable.
\end{lemma}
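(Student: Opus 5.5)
The natural approach is to apply Theorem~\ref{thm:nonrandcezero}, due to Barmpalias, Lewis-Pye and Miller, to both \dce approximations that appear in the definition of weak \dce speedability. We argue by contradiction. Assume $\gamma$ is a random \dce real and is weakly \dce speedable. Then there are a rational $\rho\in(0,1)$ and \dce approximations $\aset{a_s}\siome$ and $\aset{b_s}\siome$ of $\gamma$ such that
\[
\liminf_{s\to\infty}\abs*{\frac{\gamma-b_s}{\gamma-a_s}} \le \rho .
\]
Since $\gamma$ is random, it is irrational. Hence $\gamma-a_s$ and $\gamma-b_s$ are nonzero for all $s$, because the approximations consist of rationals. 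Fix the random \lce real $\Omega$ and its \lce approximation $\aset{\Omega_s}\siome$ used to define $\partial$. We may assume $\Omega_s<\Omega$ for all $s$, since $\Omega$ is irrational, so the denominators $\Omega-\Omega_s$ are positive.

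By Theorem~\ref{thm:nonrandcezero}, the value $\partial\gamma$ is well-defined, meaning the limit exists and does not depend on which \dce approximation is used. Because $\gamma$ is random, $\partial\gamma\neq 0$. Consequently
\[
\lim_{s\to\infty}\frac{\gamma-a_s}{\Omega-\Omega_s}=\partial\gamma=\lim_{s\to\infty}\frac{\gamma-b_s}{\Omega-\Omega_s}.
\]
For every $s$ we have the identity
\[
\frac{\gamma-b_s}{\gamma-a_s}=\frac{(\gamma-b_s)/(\Omega-\Omega_s)}{(\gamma-a_s)/(\Omega-\Omega_s)}.
\]
The denominator on the right converges to $\partial\gamma\neq0$, so the quotient converges to $\partial\gamma/\partial\gamma=1$. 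The full limit of the absolute value of the ratio is therefore $1$, and so its $\liminf$ is $1>\rho$. This contradicts the choice of $\rho$, which completes the argument.

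The only delicate point is making sure that Theorem~\ref{thm:nonrandcezero} supplies exactly what is needed. First, the limit $\partial\gamma$ must exist and be the same for \emph{every} \dce approximation of $\gamma$, not just for some particular one. Second, $\partial\gamma$ must be finite and nonzero in the random case. I read ``well-defined'' in the theorem as covering both independence from the approximation and existence of a finite limit, as in Theorem~\ref{thm:nonranlcewd}, and it is combined with $\partial\gamma\neq 0$ from the randomness of $\gamma$. If that independence were only available for \lce approximations, I would first write each \dce approximation as a difference of two \lce approximations and then use linearity of $\partial$. Beyond that point the proof is routine.
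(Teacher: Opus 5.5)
Your proposal is correct and is essentially the paper's first proof: you apply Theorem~\ref{thm:nonrandcezero} to both d.c.e.\ approximations, use $\partial\gamma\neq 0$, and conclude that $(\gamma-b_s)/(\gamma-a_s)\to 1$, which rules out a $\liminf$ of at most $\rho<1$. Your check that the denominators are nonzero is a harmless refinement; note that the paper also gives a second, direct combinatorial proof (Rettinger's one-sidedness plus a Solovay test) that does not rely on Theorem~\ref{thm:nonrandcezero}.
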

\begin{proof}
Let $\gamma$ be a random \dce real, and let $\aset{a_s}\siome $ and $\aset{b_s}\siome $ be any two \dce approximations of $\gamma$.
By Theorem \ref{thm:nonrandcezero}, we have $\partial\aset{a_s} = \partial\aset{b_s} = \partial \gamma \neq 0$, i.e.,
\[
\lim_{s\rightarrow \infty}\frac{\gamma - a_s}{\Omega - \Omega_s} = \lim_{s\rightarrow \infty}\frac{\gamma - b_s}{\Omega - \Omega_s} \neq 0.
\]
Then 
\[	
\lim_{s\rightarrow \infty}\frac{\gamma - b_s}{\gamma - a_s } = \lim_{s\rightarrow \infty}\frac{ \frac{\gamma - b_s}{\Omega - \Omega_s} } { \frac{\gamma - a_s}{\Omega - \Omega_s} } = 1.
\]
Thus, $\gamma$ is not weakly \dce speedable.
\end{proof}

Merkle and Titov~\cite{merkleSpeedableLeftcNumbers} observed that it is immediate from Theorem \ref{thm:nonranlcewd} that a random \lce real cannot be \lce speedable. Moreover, they gave a direct combinatorial proof for the latter assertion. Similar to the \lce case, here we also give a direct combinatorial proof of Lemma~\ref{lem:randce}.

\begin{proof}[Alternative proof of Lemma~\ref*{lem:randce}]
For a proof by contradiction, assume that for some random real~$\alpha$, there are \dce approximations $\aset{a_s}\siome$ and~$\aset{b_s}\siome$ of $\alpha$ and $\rho\in(0,1)$ such that
\begin{equation}\label{eq:weakspdpr}
\abs*{\alpha-b_s} \le \rho \abs*{\alpha-a_s} \text{ for infinitely many } s.
\end{equation}

By an argument of Rettinger~\cite[Theorem~9.2.4]{rodenisbook}, \dce approximations of the random real~$\alpha$ cannot be two-sided. Thus, the two approximations~$\aset{a_s}\siome$ and~$\aset{b_s}\siome$ must be left-sided or right-sided. Actually, they must be either both left-sided or both right-sided; otherwise, the real~$\alpha$ would be \lce and \rcen, hence computable.
Without loss of generality, we can assume that both approximations are left-sided.

We define a function~$f\colon \Nat\to \Nat$ by letting~$f(0)=1$ and for all~$s\ge0$,
\[	f(s+1) = \min \aset{ i > f(s) \colon a_i \ge b_{s+1} }. 	\]
It is easy to check that $f$ is well-defined and is a strictly increasing computable function where $f(s) \ge s+1$ and $a_{f(s)} \ge b_s$ for all $s$. Moreover, by \eqref{eq:weakspdpr}, we have
\begin{equation}\label{eq:weakspdpr2}
\alpha-a_{f(s)} \le \rho (\alpha-a_s)  \text{ for infinitely many } s.
\end{equation}
We partition the natural numbers into consecutive intervals~$\aset{J_r}\riome$, which are defined inductively. Let~$J_0=\aset{0}$, and for $r\ge0$ let
\[
J_{r+1} = \{\max J_r + 1, \max J_r + 2, \ldots,  f(\max J_r) \}.
\]
By construction and since the function~$f$ is strictly increasing, for all~$r$ and all~$s$ in~$J_r$, the value~$f(s)$ is in~$J_r \cup J_{r+1}$. For all~$r$ let
\[
c_r = \min \{ a_s  \colon s \in J_r  \} \makebox[5em]{ and } 
d_r = \max \{ a_s  \colon s \in J_r \cup J_{r+1} \} .
\]
Now fix indices~$s$ and~$r$ such that~$s$ is in~$J_r$, and note that $c_r \le a_s$ and $d_r \ge a_{f(s)}$. Moreover, $a_j < \alpha$ holds for all~$j$.
Since every interval~$J_r$ is finite, by~\eqref{eq:weakspdpr2}, for infinitely many~$r$ we have
\begin{equation}\label{eq:weakspdpr3}
\alpha- d_r \le \rho (\alpha- c_r), \text{ i.e., } \alpha \le c_r + \frac1{1-\rho} (d_r - c_r).
\end{equation}
Let $I_r = [c_r,  c_r + \frac1{1-\rho} (d_r - c_r)]$. Then $\alpha \in I_r$ for infinitely many~$r$.

On the other hand, as $d_r \in \aset{a_s \colon s \in J_r \cup J_{r+1}}$ and $c_r \in \aset{a_s \colon s \in J_r}$, it follows by the triangle inequality that $d_r - c_r \le \sum_{s \in J_r \cup J_{r+1}} \abs{a_{s+1} - a_s}$.
Then
\[
\sum_{r \in \omega} (d_r - c_r) \le \sum_{r \in \omega} \sum_{s \in J_r \cup J_{r+1}} \abs{a_{s+1} - a_s} \le 2 \sum_{s \in \omega} \abs{a_{s+1} - a_s} < \infty.
\]
Therefore, the sum of the lengths of the intervals~$I_r$ is finite.
Thus~$\aset{I_r}\riome$ is a Solovay test that covers the random real~$\alpha$, a contradiction.
\end{proof}

\begin{lemma}\label{lem:nonrandce}
	If $\gamma$ is a nonrandom \dce real, then $\gamma$ is strongly \dce $0$-speedable.
\end{lemma}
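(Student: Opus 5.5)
The plan is to modify a given \dce approximation of~$\gamma$ by inserting, at suitable stages, a pair of consecutive terms: first a point at moderate distance from~$\gamma$, then a point much closer to~$\gamma$. This makes $\abs{\gamma-a_{s+1}}/\abs{\gamma-a_s}$ arbitrarily small for infinitely many~$s$. Nonrandomness of~$\gamma$ tells us where to put these pairs, and a suitably weighted Solovay test keeps the total variation finite. We may assume~$\gamma$ is irrational, since otherwise there is nothing to prove.

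\textbf{Step 1: the test.} Fix a \dce approximation $\aset{g_s}\siome$ of~$\gamma$. Since~$\gamma$ is nonrandom, there is a \ml test $\aset{U_n}\nolimits_{n\in\omega}$ with $\mu(U_n)\le 2^{-2n}$ and $\gamma\in\extclass{U_n}$ for all~$n$. Every string $\sigma\in U_n$ corresponds to a closed dyadic interval~$J_\sigma$ of length $\ell_\sigma=2^{-|\sigma|}$. Let $3J_\sigma$ denote the interval with the same center $m_\sigma$ and three times the length. Since~$\gamma$ is irrational, it is not an endpoint of any~$J_\sigma$. Hence if $\gamma\in J_\sigma$, then $g_s\in 3J_\sigma$ for all sufficiently large~$s$. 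The level-$n$ intervals satisfy $\sum_{\sigma\in U_n}\ell_\sigma\le 2^{-2n}$.

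\textbf{Step 2: construction.} We build $\aset{a_t}\nolimits_{t\in\omega}$ in stages, outputting~$g_s$ at stage~$s$. At stage~$s$ we then consider, in a fixed order, every pair $(n,\sigma)$ with $n\le s$ and $\sigma$ enumerated into~$U_n$ by stage~$s$ such that $\sigma$ has not yet been used and $g_s\in 3J_\sigma$. For each such pair, we declare it used and append two terms: first $p_\sigma=m_\sigma+(n+3)\ell_\sigma$, then~$m_\sigma$. Consider any $\sigma\in U_n$ with $\gamma\in J_\sigma$; by Step~1 there is at least one for each~$n$. This~$\sigma$ is eventually used, and at that point $\abs{\gamma-m_\sigma}\le\ell_\sigma/2$ and $\abs{\gamma-p_\sigma}\ge(n+2)\ell_\sigma$. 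The consecutive pair $p_\sigma, m_\sigma$ therefore has ratio at most $1/(2(n+2))$, which tends to~$0$ as $n\to\infty$. So the new approximation is $0$-speedable via $s\mapsto s+1$.

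\textbf{Step 3: verification.} It remains to check that $\aset{a_t}$ is a \dce approximation, and this is the main point requiring care. Every inserted term lies within $(n+4)\ell_\sigma$ of the current~$g_s$. Going from~$g_s$ through $p_\sigma$ and~$m_\sigma$ to the next inserted point or to~$g_{s+1}$ therefore adds variation $O(n\ell_\sigma)$ on top of the original variation $\abs{g_{s+1}-g_s}$. Each~$\sigma$ is used at most once, so the total variation is at most
\[
\sum_s\abs{g_{s+1}-g_s}+O\Bigl(\sum_n n\,2^{-2n}\Bigr)<\infty .
\]
For convergence, fix $\varepsilon>0$. The level-$n$ intervals with $(n+4)\ell_\sigma\ge\varepsilon$ are finitely many in total, since $\sum_n\sum_{\sigma\in U_n}\ell_\sigma<\infty$. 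Hence for all sufficiently late stages, every inserted term is within~$\varepsilon$ of~$g_s$, and $g_s\to\gamma$. Finally, the terms are computable rationals, so by Remark~\ref{remark:dyadic-approximation} the approximation can also be taken dyadic if desired. The delicate points are the choice of the excursion size: it must be proportional to $n\ell_\sigma$, large enough to make the ratio tend to~$0$ yet summable against $\mu(U_n)\le 2^{-2n}$. The other point is the use of the enlarged interval~$3J_\sigma$, which guarantees that every interval containing~$\gamma$ is actually triggered.
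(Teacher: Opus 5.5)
Your proposal is correct and is essentially the paper's argument: both pass to a Martin-L\"of test with $\mu(U_n)\le 2^{-2n}$ and, for each test string $\sigma$ of level $n$, insert a consecutive pair consisting of a point at distance roughly $w_n 2^{-|\sigma|}$ from the anchor, with $w_n\to\infty$, followed by a point within $2^{-|\sigma|}$ of $\gamma$ whenever $\sigma$ is a prefix of $\gamma$, so that the added variation is of order $\sum_n w_n 2^{-2n}<\infty$; the paper takes $w_k=2^k$, anchors the pair at the approximant $r_{j(s)}$ itself and outputs only such pairs, whereas you take $w_n=n+3$, anchor at the center $m_\sigma$ and keep the original approximation interleaved. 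Two harmless slips: the inserted terms lie within $(n+4.5)\ell_\sigma$ (not $(n+4)\ell_\sigma$) of $g_s$, and the enlargement to $3J_\sigma$ is not actually needed, since irrationality already places $\gamma$ in the interior of $J_\sigma$.
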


\begin{proof}
We can assume that~$\gamma$ is not rational because, otherwise, $\aset{\gamma - 2^{-s^2}}\siome$ is a strongly \dce $0$-speedable approximation to~$\gamma$. Fix some \dce approximation~$\aset{r_j}\jiome $ of~$\gamma$. By Remark~\ref{remark:dyadic-approximation}, we can assume that the~$r_j$ are pairwise distinct dyadic rationals. Let~$\tau_j$ be the unique string with no trailing zeros such that~$r_j=0.\tau_j$. As $\gamma $ is not random, let $\aset{U_i}\iiome $ be a \ml test which covers $\gamma$, i.e., we have for all~$i$
\[	 \mu(U_i) \le 2^{-i} \makebox[4em]{ and }  \gamma \in \extclass{U_i} .\]
As $\aset{U_i}\iiome $ are uniformly c.e., we can assume that the sets~$U_i$ are pairwise disjoint.
This can be achieved by modifying the uniform enumeration process as follows: whenever there is some string $\sigma$ to be enumerated into $U_j$ that has already appeared before in some other set $U_i$, instead of $\sigma$, enumerating into $U_j$ all extensions of $\sigma$ of a suitable length such that these strings have not appeared as far in the whole enumeration.
Let~$\sigma_0, \sigma_1, \ldots$ be an effective enumeration of all the strings in $U_{2k}$ for all $k$.

We construct an approximation $\aset{c_s}\siome $ of $\gamma$ such that 
\begin{equation}\label{eq:nonrandce1}
\liminf_{s\rightarrow \infty}\abs*{\frac{\gamma - c_{2s+1}}{\gamma - c_{2s} }} = 0.
\end{equation}

\medskip \noindent {\bf Construction: }
\textit{At stage~$0$:} declare all strings~$\sigma_i$ except~$\sigma_0$ to be unused, and let~$i(0)=j(0)=0$.\\
\textit{At stage~$s>0$:} let~$j(s)$ be the minimum index~$j>j(s-1)$ such that for some~$i \le j$, the string~$\sigma_i$ is yet unused and a prefix of~$\tau_j$. (We show the existence of such indices~$i$ and~$j$ shortly in the verification part below.)
Among all such indices~$i$, let~$i(s)$ be equal to the minimum one and declare~$\sigma_{i(s)}$ to be used. 
We let~$k(s)$ be the unique index such that~$\sigma_{i(s)}$ is in ~$U_{2 k(s)}$. 
Then we let
\[
c_{2s} = r_{j(s)} + \delta_s \makebox[3.5em]{ and  }	
c_{2s+1} = r_{j(s)} 
\qquad\makebox[3.5em]{ where  } \delta_s = 2^{-|\sigma_{i(s)}|+k(s)}.
\]

\medskip \noindent {\bf Verification: }
We call a string \emph{true}, if it is in $\aset{\sigma_i}\iiome$ and is a prefix of~$\gamma$.
By choice of the~$U_i$, each~$U_{2k}$ contains a true string.
Then there are infinitely many true strings.
As~$\aset{r_t}\tiome$ converge to~$\gamma$, thus each of the infinitely many true strings~$\sigma_r$ is a prefix of~$\tau_t$ for almost all~$t$. So in each stage there are indices~$j$ and~$i$ as required. Consequently, the sequences~$\aset{i(s)}\siome$ and~$\aset{{j(s)}}\siome$ are well-defined. Furthermore, both sequences are computable because the construction is effective. 
By definition, the sequence~$\aset{j(s)}\siome$ is strictly increasing.
Since we never make a string~$\sigma_i$ unused again after it has been used, every string $\sigma_i$ is used at most once.

For every true string~$\sigma_r$, there must be a stage~$s$ such that $\sigma_r$ is a prefix of $\tau_j$ for all $j \ge s$.
Then if $\sigma_r$ has not been used at stage~$s$ yet, by definition, $\sigma_r$ must be used in at most~$r$ many more stages. Thus, every true string is used eventually.

We call a stage \emph{true}, if a true string is used at this stage.
Let $s$ be a true stage. The strings~$\tau_{j(s)}$ and~$\gamma$ share the prefix~$\sigma_{i(s)}$, hence the reals~$r_{j(s)}= c_{2s+1}$ and~$\gamma$ differ at most by~$2^{-|\sigma_{i(s)}|}$.
On the other hand, the values~$c_{2s}$ and~$r_{j(s)}$ differ by~$\delta_s = 2^{-|\sigma_{i(s)}|+k(s)}$, hence~$c_{2s}$ and~$\gamma$ differ by at least~$(2^{k(s)}-1) 2^{-|\sigma_{i(s)}|}$.
Thus,
\begin{equation}\label{eq:nonrandce2}
\abs*{\frac{\gamma - c_{2s+1}}{\gamma - c_{2s} }}	\le \frac{1}{2^{k(s)}-1} \text{ for every true stage $s$.}
\end{equation}
Since each $U_{2k}$ is prefix-free, it contains only one true string. 
On the other hand, in every stage the string used is distinct.
Then $k(s)$ are different for different true stages~$s$.
Thus, \eqref{eq:nonrandce2} implies \eqref{eq:nonrandce1}, as there are infinitely many true stages.

To complete the proof, it remains to show that the sequence~$\aset{c_i}\iiome$ is a \dce approximation of~$\gamma$. 
It suffices to show that  the sum over the terms~$|c_{i+1}-c_{i}|$ is finite. Note that the latter implies, in particular, that the sequence~$\aset{c_i}\siome$ converges, hence converges to the same limit~$\gamma$ as its subsequence~$\aset{r_{j(s)}}\siome$. By definition, we have
\[
|c_{2s+1}-c_{2s}| \le \delta_s
\makebox[4em]{ and }	 
|c_{2s+2}-c_{2s+1}|	 
\le | r_{j(s+1)}  - r_{j(s)}| + \delta_{s+1}.
\]
By the triangle inequality and because~$\aset{r_{j(s)}}\siome$ is an increasing subsequence of the \dce approximation~$\aset{r_j}\jiome$, the sum over the terms~$|r_{j(s+1)}  - r_{j(s)}|$ is finite.
The sum over the~$\delta_s$ is also finite since each~$\sigma_i$ is used at most once. We have 
\[
\sum_{s \in \omega } \delta_s 
= 	\sum_{s \in \omega } 2^{-|\sigma_{i(s)}|+k(s)}
\le \sum_{k\in \omega } \sum_{\sigma \in U_{2k} } 2^{-|\sigma|+k}
\le \sum_{k\in \omega } 2^{k} \sum_{\sigma \in U_{2k}} 2^{-|\sigma|}
\le \sum_{k\in \omega } 2^{-k} = 2.
\]

\end{proof}

\subsection{Full \dce speedability}\label{sec:fuddcespd}

\begin{lemma}\label{lem:twsspee} 
Every two-sided computable approximation is speedable.
\end{lemma}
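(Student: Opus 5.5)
The plan is to exploit the fact that a two-sided approximation $\aset{a_s}\siome$ of $\alpha$ crosses $\alpha$ infinitely often. We may assume that $\alpha$ is irrational, since otherwise there is nothing to prove; then $a_s\neq\alpha$ for all $s$. Call $s$ a \emph{crossing} if $\alpha$ lies strictly between $a_s$ and $a_{s+1}$. There are infinitely many crossings, and for a crossing $s$ with $d_s=\abs{a_{s+1}-a_s}$ we have $\abs{\alpha-a_s}+\abs{\alpha-a_{s+1}}=d_s$. Hence at least one of $\abs{\alpha-a_s}$ and $\abs{\alpha-a_{s+1}}$ is at least $d_s/2$. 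The aim is to produce a computable order $f$ with $f(s)=f(s+1)$ for the relevant $s$ and with $\abs{\alpha-a_{f(s)}}\le d_s/4$ for infinitely many crossings $s$. Then for each such crossing, one of the two indices $s$ and $s+1$ gives a ratio at most $1/2$, so the liminf in \eqref{delta-2-speedability-property} is at most $1/2$ and the approximation is $\tfrac12$-speedable.

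The construction of $f$ uses only quantities computable from the approximation. For each $s$, search for the least $t>s+1$ with $\abs{a_t-a_{t+1}}\le d_s/8$, which exists because consecutive differences tend to $0$. Let $f(s)$ be the largest index $u$ in some computable window starting at $t$, for instance up to the next index at which the sequence has moved back and forth across $a_t$ once more. Then take the running maximum so that $f$ is nondecreasing; since $f(s)>s$, it is also unbounded. The intended verification is the following. Among the infinitely many crossings, consider those crossings $s$ for which the next crossing $u>s+1$ satisfies $d_u\le d_s/8$. At such $u$ we get $\abs{\alpha-a_u}\le d_u\le d_s/8$, and the goal is to arrange that $f(s)$ lands on such a $u$, or at least on an index no farther from $\alpha$.

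The main obstacle is exactly this last step. Crossings cannot be recognized effectively, so I must argue that for infinitely many crossings $s$ the computably chosen index $f(s)$ is provably close to $\alpha$. The first thing I would try is to replace the single search by a liminf argument over the noncomputable sequence of crossings $s_0<s_1<\cdots$. Since $d_{s_k}\to0$, there are infinitely many $k$ with $d_{s_{k+1}}\le d_{s_k}/8$, and at each such $k$ the approximation has to pass from one side of $\alpha$ to the other. I would then choose $f(s)$ as the first index after $s+1$ at which the approximation has returned into the closed interval between $a_s$ and $a_{s+1}$ and made a step of size at most $d_s/8$ that changes direction relative to the previous step. The intention is that at such $k$ this index is forced to lie within $d_s/4$ of $\alpha$.

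If this direct certification fails, the fallback is to weaken the target from $\rho=1/2$ to some fixed $\rho<1$. One can then average over the pair $s,s+1$ and the next two crossings, still using that at each crossing both sides sum to the step size. This only requires $\abs{\alpha-a_{f(s)}}\le c\,d_s$ for some constant $c<1/2$, which I expect to be easier to obtain from an effective search.
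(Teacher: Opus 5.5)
Your reduction to crossings is fine: at a crossing $s$, one of $\abs{\alpha-a_s}$ and $\abs{\alpha-a_{s+1}}$ is at least $d_s/2$. But, as you acknowledge, the argument stops exactly where the work lies. You never show that some computable order $f$ satisfies $\abs{\alpha-a_{f(s)}}\le d_s/4$, or even $\le c\,d_s$ with $c<1/2$, for infinitely many crossings $s$. And since your search for $f(s+1)$ is keyed to $d_{s+1}$, nothing controls $a_{f(s+1)}$ either. The supporting claims also do not hold up.

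First, $d_{s_k}\to 0$ does \emph{not} yield infinitely many $k$ with $d_{s_{k+1}}\le d_{s_k}/8$. A sequence tending to $0$ may shrink by a bounded factor at every step. For an approximation alternating between points at distance about $2^{-m}$ on either side of $\alpha$, consecutive crossing steps shrink only by factors near $3/4$ and $2/3$.

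Second, your searches need not halt. For a non-crossing $s$ the interval between $a_s$ and $a_{s+1}$ need not contain $\alpha$, so the approximation need not return into it. Since crossings cannot be recognized, $f$ need not be total.

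Third, even at crossings, the conditions you test (a small step, a change of direction, lying between $a_s$ and $a_{s+1}$) can be met far from $\alpha$. Right after each crossing the approximation may perform a tiny zigzag next to $a_{s+1}$, on the same side of $\alpha$. Your rule then selects a value at distance about $\abs{\alpha-a_{s+1}}$ from $\alpha$, which exceeds $d_s/4$ whenever $\abs{\alpha-a_{s+1}}>\abs{\alpha-a_s}/3$. An approximation can arrange this at every crossing, approaching $\alpha$ only by one-sided creeping between crossings. No finite portion of the approximation certifies closeness to $\alpha$, and your fallback with $c<1/2$ meets the same obstruction.

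The missing idea is to let the \emph{failure} of candidate speedups supply the information that a search cannot certify. This leads to a non-uniform proof by contradiction.

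The paper fixes $\rho\in(0,1)$, puts $c=\rho/(1+\rho)$ and $k\ge 1+c^{-2}$, and assumes that $\aset{a_s}\siome$ is not $\rho$-speedable. Then $f_0(n)=n+1$ fails, so from some $n_0$ on, $\alpha$ avoids the interval $I_0(n)$ of radius $c\abs{a_{f_0(n)}-a_n}$ around $a_{f_0(n)}$. The next candidate $f_i$ is computable from the finitely many constants $n_0,\dots,n_{i-1}$. It jumps to the first index after $n$ whose value lies outside all $I_j(\ell)$ with $j<i$ and $n_j\le\ell\le n$. It fails as well, and this is iterated up to $f_{2k}$.

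Two-sidedness gives $s\ge n_{2k}$ with $a_s<\alpha<a_{s+1}$. Take the last index $u$ with $a_u\ge a_{s+1}$. The values $a_{f_1(u)},\dots,a_{f_k(u)}$ all lie below $a_{s+1}-c(a_{s+1}-a_s)$. Each $a_{f_j(u)}$ avoids the intervals $I_i(u)$ for $i<j$, whose radii exceed $(a_{s+1}-a_s)/(k-1)$. By Proposition~\ref{prop:nocover}, some $a_{f_i(u)}$ lies below $a_s$. The symmetric step from the last index $v$ with $a_v\le a_{f_i(u)}$ produces some $a_{f_{i+j}(v)}$ outside $[a_{f_i(u)},a_u]$, contradicting the choice of $u$.

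Your proposal has nothing playing the role of these finitely many failing candidates that successively constrain $\alpha$, nor of the extremal ``last index'' choices. Without some such device the key step remains unproved.
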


For the proof, we need the following technical fact.
\begin{proposition}\label{prop:nocover}
Given an interval $[a,b]$ and an array of intervals $I_0 = [z_0-d_0,z_0+d_0]$, $\dots$, $I_m = [z_m-d_m,z_m+d_m]$, if $d_i \ge (b-a)/m$ for every $0\le i \le m $ and $z_j\notin I_i$ for all $0\le i < j \le m $, then there exists $i\in\aset{0,\dots,m}$ such that $z_i \notin [a,b]$.
\end{proposition}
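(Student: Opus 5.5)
We prove the contrapositive by a packing argument. Suppose, for a contradiction, that all centers $z_0,\dots,z_m$ lie in $[a,b]$. The hypothesis $z_j\notin I_i$ for $i<j$ means $|z_j-z_i|>d_i\ge (b-a)/m$ for all $i<j$. So any two distinct centers are at distance strictly greater than $\delta=(b-a)/m$. For any pair $i\neq j$ we can use the condition with the smaller index playing the role of $i$, so the separation holds for every pair regardless of order.

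Next sort the centers increasingly as $z_{\pi(0)}<z_{\pi(1)}<\dots<z_{\pi(m)}$. They are pairwise distinct, because consecutive ones differ by more than $\delta\ge 0$. Each of the $m$ consecutive gaps $z_{\pi(k+1)}-z_{\pi(k)}$ exceeds $\delta$. Summing the gaps gives
\[
z_{\pi(m)}-z_{\pi(0)}>m\delta=b-a .
\]
But both endpoints lie in $[a,b]$, so their difference is at most $b-a$, which is a contradiction. Hence some $z_i$ lies outside $[a,b]$.

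Two degenerate points need checking, and this is the only step that needs care.
\begin{itemize}
\item The statement implicitly requires $m\ge 1$, since it divides by $m$; then there are $m+1\ge 2$ centers and $m$ gaps, so the count works.
\item The inequality $|z_j-z_i|>d_i$ is strict because $I_i$ is closed and $z_j\notin I_i$. This strictness is what yields $>b-a$ rather than $\ge b-a$, and it settles the boundary case where the extreme centers sit exactly at $a$ and $b$.
\item If $b-a$ happens to be $0$ (or the interval is degenerate), the same argument still applies: distinct centers cannot both lie in a single point.
\end{itemize}
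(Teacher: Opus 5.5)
Your proof is correct and is essentially the paper's argument: you get the pairwise separation $|z_j-z_i|>d_i\ge (b-a)/m$, sort the centers, and sum the $m$ consecutive gaps to obtain $z_{\pi(m)}-z_{\pi(0)}>b-a$, so $[a,b]$ cannot contain all the $z_i$. Your remarks on needing $m\ge 1$, and on the strict inequality coming from $I_i$ being closed, are valid details that the paper leaves implicit.
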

\begin{proof}
For all $0\le i < j \le m $, as $z_j\notin I_i$, we have $\abs{z_j-z_i} > d_i \ge (b-a)/m$.
Rearrange $z_0,\dots,z_m$ in order of magnitude such that $z_{\pi(0)}<\dots<z_{\pi(m)}$.
Then $z_{\pi(i)} - z_{\pi(i-1)} > (b-a)/m$ for all $0< i \le m$.
Thus, 
\[	z_{\pi(m)} - z_{\pi(0)} = \sum_{i=1}^m (z_{\pi(i)} - z_{\pi(i-1)} ) > m\cdot \frac{b-a}{m} = b-a.	\]
Therefore, the interval $[a,b]$ cannot cover both $z_{\pi(m)}$ and $z_{\pi(0)}$.
\end{proof}

\begin{proof}[Proof of Lemma~\ref*{lem:twsspee}]
Given $\aset{a_s}\siome $ as in the hypothesis and $\rho \in (0,1)$, for a contradiction suppose that $\aset{a_s}\siome$ is not $\rho$-speedable.

Let $c = \rho/(1+\rho)$ and $k $ be an integer such that $ k \ge 1+ c^{-2} $.
For every $i\in\aset{0,\cdots,2k}$, we inductively define a computable order $f_i$ and a constant $n_i$ such that
\begin{equation}\label{eq:require}
	n_i \ge n_{i-1} \text{ and } \alpha\notin I_i(n) \text{ for all } n\geq n_i,
\end{equation}
where 
\[ n_{-1} = 0 \text{ and } I_i(n) = \big[ a_{f_i(n)} - c \abs{a_{f_i(n)} - a_n},\, a_{f_i(n)} + c \abs{a_{f_i(n)} - a_n} \big]. \]
First, let 
\[ f_0(n) = n+1. \]
As $\aset{a_s}\siome$ is not $\rho$-speedable, there exists an index~$s$ such that, for every~$n\geq s$, it holds that~$\abs*{\frac{\alpha - a_{f_0(n)}}{\alpha - a_n}}>\rho$. It implies that $\alpha\notin I_0(n)$.
We define $n_0$ to be the least such index $s$.
Thus, \eqref{eq:require} is satisfied for $i=0$.

Given $1\le i\le 2k$, suppose $f_j$ and $n_j$ are defined and satisfy \eqref{eq:require} for all $j\in\aset{0,\cdots,i -1 }$.
Then for all $n\ge n_{i-1}$ we have $\alpha \notin \bigcup_{j=0}^{i - 1} \bigcup_{\ell=n_j}^{n}I_j(\ell)$.
Thus, 
\begin{equation}\label{eq:betcoeff}
	f_i(n) = \begin{cases}
	n+1	& 	\mbox{ if }	 n < n_{i-1}, \\
	\min\aset{m>n: a_m\notin \bigcup_{j=0}^{i-1} \bigcup_{\ell=n_j}^{n}I_j(\ell)}	& \mbox{ otherwise.}
	\end{cases}		
\end{equation}
is well-defined.
It is easy to check that $f_i$ is a strictly increasing function. Now that $f_i$ is not a $\rho$-speedup function for $\aset{a_s}\siome$, there exists an index~$s \ge n_{i-1}$ such that, for every~$n\geq s$, it holds that~$\abs*{\frac{\alpha - a_{f_i(n)}}{\alpha - a_n}}>\rho$, which implies that~$\alpha\notin I_i(n)$.
We define $n_i$ to be the least such index $s$.
Then \eqref{eq:require} is satisfied for $i$.
This completes the inductive definition of $f_i$ and $n_i$ for all $i\in\aset{0,\cdots,2k}$.

As $\aset{a_s}\siome $ is a two-sided approximation to $\alpha$, let $s\ge n_{2k}$ be an index such that $a_s<\alpha<a_{s+1}$.

Now we derive a contradiction as promised. The proof idea is as follows.
We first find the index $u\ge s+1$ such that $a_u$ is the last one to the right of or equal to $a_{s+1}$.
Then all of $a_{f_1(u)}$, $a_{f_2(u)}$, $\dots$, $a_{f_k(u)}$ are to the left of $a_{s+1}$. Moreover, they must jump over the interval $[a_{s+1}-c(a_{s+1} - a_s), a_{s+1}]$. Then their distances from $a_u$ are bounded below. Thus, by our arrangement of the parameters, we argue that at least one of them must be to the left of $a_s$. This is illustrated in Figure~\ref{fig:jumpover}.
\begin{figure}[!t]
\centering
\begin{tikzpicture}
\tikzmath{\afu = 2; \as = 4; \afua = 5.5; \aa = 7; \afuk = 8.5; \afub = 10; \afs = 12; \au = 14; 
\cc=0.12; \cafs=\cc*(\afs-\as); \cafua=\cc*(\au-\afua); \cafub=\cc*(\au-\afub); \cafuk=\cc*(\au-\afuk); 
}

\draw[->,-{Stealth[length=8pt]}] (0,0) -- (14.5,0);
\fill (\aa,0) circle (2pt) node[below=2pt]  {$\alpha$};

\fill (\as,0) circle (2pt) node[below=2pt]  {$a_s$};
\fill (\afs,0) circle (2pt) node[below=2pt]  {$a_{s+1}$};
\draw[->,-{Stealth[length=8pt]}] (\as,0) to[out=55, in=125] node[pos=0.84, right] {$f_0$} (\afs,0);
\draw[line width=1.5pt, arrows = {Bracket-Bracket}] (\afs-\cafs,0) -- (\afs+\cafs,0) node[above=2pt] {$I_0(s)$};
\draw [|<->|] (\afs-\cafs,-0.7) -- node[below=2pt] {$c(a_{s+1} - a_s)$} (\afs,-0.7);

\fill (\au,0) circle (2pt) node[below=2pt] {$a_u$};
\fill (\afua,0) circle (2pt) node[below=2pt] {$a_{f_1(u)}$};
\draw[dashed,->,-{Stealth[length=8pt]}] (\au,0) .. controls (\afs+1,3) and (\aa+1,5) .. node[pos=0.92, left] {$f_1$} (\afua,0);
\draw[line width=1.5pt, arrows = {Bracket-Bracket}] (\afua-\cafua,0) -- (\afua+\cafua,0) node[above=2pt] {$I_1(u)$};	
\fill (\afub,0) circle (2pt) node[below=2pt] {$a_{f_2(u)}$};
\draw[dashed,->,-{Stealth[length=8pt]}] (\au,0) .. controls (\afs+1,3) and (\aa+1,5) .. node[pos=0.92, left] {$f_2$} (\afub,0);
\draw[line width=1.5pt, arrows = {Bracket-Bracket}] (\afub-\cafub,0) -- (\afub+\cafub,0) node[above=2pt] {$I_2(u)$};	
\fill (\afuk,0) circle (2pt) node[below=2pt] {$a_{f_k(u)}$};
\draw[dashed,->,-{Stealth[length=8pt]}] (\au,0) .. controls (\afs+1,3) and (\aa+1,5) .. node[pos=0.92, left] {$f_k$} (\afuk,0);	
\draw[line width=1.5pt, arrows = {Bracket-Bracket}] (\afuk-\cafuk,0) -- (\afuk+\cafuk,0) node[above=2pt] {$I_k(u)$};	
\fill (\afu,0) circle (2pt) node[below=2pt] {$a_{f_i(u)}$};	
\draw[dashed,->,-{Stealth[length=8pt]}] (\au,0) .. controls (\afs+1,3) and (\aa+1,5) .. node[pos=0.92, left] {$f_i$} (\afu,0);
\end{tikzpicture}
\caption{$a_{f_1(u)}$, $a_{f_2(u)}$, $\dots$, $a_{f_k(u)}$ jump over the interval $[a_{s+1}-c(a_{s+1} - a_s), a_{s+1}]$, thus there exists $i\in\aset{1,\cdots,k}$ such that $a_{f_i(u)} < a_s$.}
\label{fig:jumpover}
\end{figure}
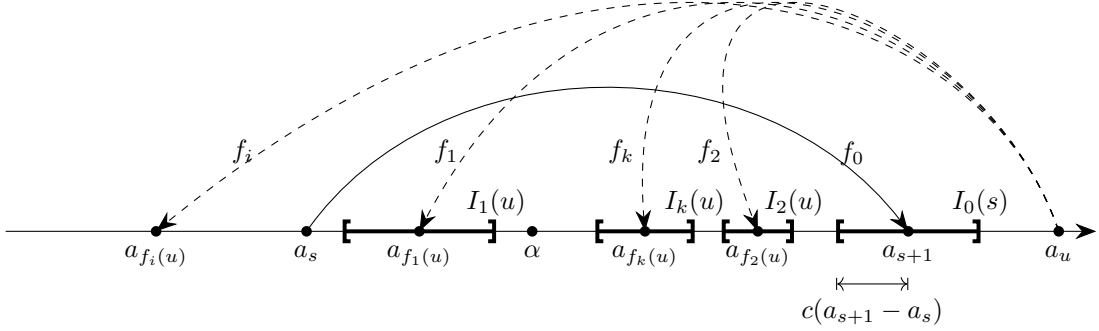
Fixing one such point $a_{f_i(u)}$, we find the index $v\ge f_i(u)$ such that $a_v$ is the last one to the left of or equal to $a_{f_i(u)}$. In a symmetrical way as before, we find one point $a_{f_{i+j}(v)}$ to be to the right of $a_u$. However, this contradicts the fact that $a_u$ is the last one to the right of or equal to $a_{s+1}$. The whole argument is illustrated in Figure~\ref{fig:backandforth}.
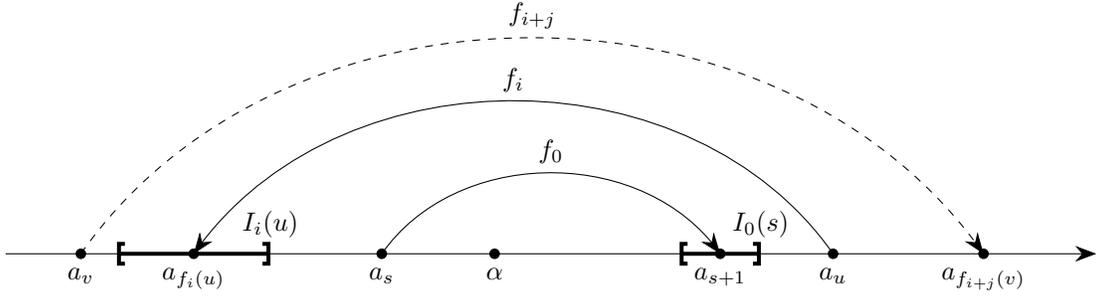
\begin{figure}[!t]
\centering
\begin{tikzpicture}
\tikzmath{\av = 1; \afu = 2.5; \as = 5; \aa = 6.5; \afs = 9.5; \au = 11; \afv = 13;
\cc=0.12; \cafs=\cc*(\afs-\as); \cafu=\cc*(\au-\afu); 
}
\draw[->,-{Stealth[length=8pt]}] (0,0) -- (14.5,0) ;
\fill (\aa,0) circle (2pt) node[below=2pt] {$\alpha$};

\fill (\as,0) circle (2pt) node[below=2pt]  {$a_s$};
\fill (\afs,0) circle (2pt) node[below=2pt]  {$a_{s+1}$};
\draw[->,-{Stealth[length=8pt]}] (\as,0) to[out=55, in=125] node[midway, above] {$f_0$} (\afs,0);
\draw[line width=1.5pt, arrows = {Bracket-Bracket}] (\afs-\cafs,0) -- (\afs+\cafs,0) node[above=2pt] {$I_0(s)$};

\fill (\au,0) circle (2pt) node[below=2pt] {$a_u$};
\fill (\afu,0) circle (2pt) node[below=2pt] {$a_{f_i(u)}$};	
\draw[->,-{Stealth[length=8pt]}] (\au,0) to[out=125, in=55] node[midway, above] {$f_i$} (\afu,0);
\draw[line width=1.5pt, arrows = {Bracket-Bracket}] (\afu-\cafu,0) -- (\afu+\cafu,0) node[above=2pt] {$I_i(u)$};

\fill (\av,0) circle (2pt) node[below=2pt] {$a_v$};
\fill (\afv,0) circle (2pt) node[below=2pt] {$a_{f_{i+j}(v)}$};
\draw[dashed,->,-{Stealth[length=8pt]}] (\av,0) to[out=55, in=125] node[midway, above] {$f_{i+j}$} (\afv,0);
\end{tikzpicture}
\caption{We first find $a_u$ to be the last one to the right of or equal to $a_{s+1}$. Then find $a_{f_i(u)}$ to the left of $a_s$, and then $a_v$ the last one to the left of or equal to $a_{f_i(u)}$, and then $a_{f_{i+j}(v)}$ to the right of $a_u$. This contradicts the fact that $a_u$ is the last one to the right of or equal to $a_{s+1}$.}
\label{fig:backandforth}
\end{figure}

Formally, let $u\ge s+1$ be the index such that
\begin{equation}\label{eq:upbound}
	a_u \ge a_{s+1} \text{ and } \forall t > u (a_t < a_{s+1}).
\end{equation}
The existence of such $u$ is guaranteed by the fact that $\aset{a_s}\siome $ converges to $\alpha$ and $\alpha<a_{s+1}$.
For each $i\in\aset{1,\cdots,k}$, by definition of $f_i$ and $s \ge n_{2k}\ge n_0$ we have
\[	a_{f_i(u)} \notin I_0(s) = \big[ a_{s+1} - c(a_{s+1} - a_s), a_{s+1} + c(a_{s+1} - a_s) \big].	\]
On the other hand, as $f_i(u)> u$,  by \eqref{eq:upbound} we have $a_{f_i(u)} < a_{s+1}$.
Thus, $a_{f_i(u)} < a_{s+1} - c(a_{s+1} - a_s)$.
Then 
\[ c\abs {a_{f_i(u)} - a_u} \ge c (a_{s+1} - a_{f_i(u)} ) > c^2 (a_{s+1} - a_s) \ge (a_{s+1} - a_s)/(k-1). \]
Moreover, by definition, $a_{f_j(u)}\notin I_i(u)$ for all $0\le i < j \le k $.
Thus, the interval $[a_s, a_{s+1}]$ and the array of intervals $I_1(u)$, $I_2(u)$, $\dots$, $I_k(u)$ satisfy the condition of Proposition~\ref*{prop:nocover}.
Therefore there exists $i\in\aset{1,\cdots,k}$ such that $a_{f_i(u)} \notin [a_s, a_{s+1}]$, \ie, $a_{f_i(u)} < a_s$. 

Fix one such $i$. Similar as before, let $v\ge f_i(u) $ be the index such that
\begin{equation}\label{eq:lowbound}
	a_v \le a_{f_i(u)} \text{ and } \forall t > v (a_t > a_{f_i(u)}).
\end{equation}
The existence of such $v$ is guaranteed by the fact that $\aset{a_s}\siome $ converges to $\alpha$ and $\alpha>a_s>a_{f_i(u)} $.
For each $j\in\aset{1,\cdots,k}$, by definition and $u \ge s \ge n_{2k}\ge n_i$ we have 
\[	a_{f_{i+j}(v)} \notin I_i(u) = \big[ a_{f_i(u)} - c(a_u - a_{f_i(u)} ),\, a_{f_i(u)} + c(a_u - a_{f_i(u)})\big].	\]
On the other hand, as $f_{i+j}(v) > v$, by \eqref{eq:lowbound}, $a_{f_{i+j}(v)} > a_{f_i(u)}$.
Thus, $a_{f_{i+j}(v)} > a_{f_i(u)} + c(a_{f_i(u)} - a_u)$.
Then 
\[ c \abs {a_{f_{i+j}(v)} - a_v} \ge c(a_{f_{i+j}(v)} - a_{f_i(u)}) > c^2 (a_{f_i(u)} - a_u)\ge (a_{f_i(u)} - a_u)/(k-1). \]
Moreover, by definition, $a_{f_{i+j}(v)}\notin I_{i+\ell}(v)$ for all $0\le \ell < j \le k $.
Thus, the interval $[a_{f_i(u)}, a_u]$ and the array of intervals $I_{i+1}(v)$, $I_{i+2}(v)$, $\dots$, $I_{i+k}(v)$ satisfy the condition of Proposition~\ref*{prop:nocover}.
Therefore, there exists $j\in\aset{1,\cdots,k}$ such that $a_{f_{i+j}(v)} \notin [a_{f_i(u)}, a_u]$. 
However, by \eqref{eq:upbound} and \eqref{eq:lowbound}, for all $t > v$, $a_t \in (a_{f_i(u)}, a_{s+1}) \subset [a_{f_i(u)}, a_u] $. As $f_{i+j}(v) > v$, this is a contradiction.
\end{proof}

The following theorem is immediate by Remark~\ref{remark:one-sided} and Lemma~\ref{lem:twsspee}.
\begin{corollary}\label{thm:dceflsp}
If a \dce real is neither \lce nor \rcen, then it is fully \dce speedable.
\end{corollary}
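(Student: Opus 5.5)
The plan is to combine Remark~\ref{remark:one-sided} with Lemma~\ref{lem:twsspee}, as the text announces. Let $\gamma$ be a \dce real that is neither \lce nor \rcen. We must check two things. First, $\gamma$ is a \dce real, which holds by hypothesis. Second, every \dce approximation of $\gamma$ is speedable.

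First note that $\gamma$ is not rational: every rational is both \lce and \rcen, via a constant approximation. So the rational clause of Definition~\ref{def:rho-speedable} is irrelevant, and speedability must be produced by a computable order.

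Now fix an arbitrary \dce approximation $\aset{a_s}\siome$ of $\gamma$. Every \dce approximation is in particular a computable approximation. Since $\gamma$ is neither \lce nor \rcen, Remark~\ref{remark:one-sided} shows that $\aset{a_s}\siome$ is neither left-sided nor right-sided.

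We then check that this forces two-sidedness in the paper's sense, namely $a_s<\gamma$ for infinitely many $s$ and $a_s>\gamma$ for infinitely many $s$. Suppose only finitely many $s$ had $a_s>\gamma$. Then the approximation would be left-sided, since left-sidedness uses $\le$. Symmetrically, suppose only finitely many $s$ had $a_s<\gamma$. Then it would be right-sided. Both are excluded, so both sets of indices are infinite.

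Lemma~\ref{lem:twsspee} now applies directly and gives that $\aset{a_s}\siome$ is speedable. Since the approximation was arbitrary, $\gamma$ is fully \dce speedable.

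There is no real obstacle here. The only point needing care is the step from "not one-sided" to "two-sided", where the non-strict inequalities in the definitions of left- and right-sided make the case split exhaustive.
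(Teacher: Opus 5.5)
Your proof is correct and matches the paper's argument: the paper says the corollary is immediate from Remark~\ref{remark:one-sided} (every computable approximation of a real that is neither left-c.e.\ nor right-c.e.\ is two-sided) together with Lemma~\ref{lem:twsspee}. Your explicit derivation of two-sidedness from ``neither left-sided nor right-sided'' and your remark that $\gamma$ is irrational are harmless details that the paper leaves implicit.
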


\section{C.a.\ speedability}\label{sec:caspd}
Similar to the \dce case, by Remark~\ref{remark:one-sided} and Lemma~\ref{lem:twsspee}, we also have the following result about fully \ca speedability.
\begin{theorem}\label{thm:caflsp}
If a \ca real is neither \lce nor \rcen, then it is fully \ca speedable.
\end{theorem}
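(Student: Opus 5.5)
The plan is to combine Remark~\ref{remark:one-sided} with Lemma~\ref{lem:twsspee}, just as in Corollary~\ref{thm:dceflsp}. We must show two things: that the real is \ca (true by hypothesis), and that every \ca approximation of it is speedable.

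First, fix a \ca real~$\alpha$ that is neither \lce nor \rcen. Such an~$\alpha$ is in particular not rational, since rational reals are \lcen. Then let~$\aset{a_s}\siome$ be an arbitrary computable approximation of~$\alpha$. By Remark~\ref{remark:one-sided}, this approximation cannot be left-sided, since otherwise~$\alpha$ would be \lcen. It cannot be right-sided either, by the symmetric argument. Hence infinitely many terms lie strictly below~$\alpha$ and infinitely many strictly above~$\alpha$. Equality $a_s=\alpha$ cannot occur, because the terms are rational and~$\alpha$ is irrational. So the approximation is two-sided in the sense of the Notation subsection.

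Next, Lemma~\ref{lem:twsspee} applies directly and shows that~$\aset{a_s}\siome$ is speedable. Since the approximation was arbitrary, all \ca approximations of~$\alpha$ are speedable, and~$\alpha$ is fully \ca speedable by Definition~\ref{def:realspd}.

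There is essentially no obstacle here. The only point worth checking is that Remark~\ref{remark:one-sided} and Lemma~\ref{lem:twsspee} are stated for arbitrary computable approximations, not merely \dce ones. Both are: the remark only uses a running maximum of a computable sequence, and the lemma's proof never uses bounded variation. The main difficulty was already handled in the proof of Lemma~\ref{lem:twsspee}.
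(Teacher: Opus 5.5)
Your proposal is correct and follows the paper's proof: Remark~\ref{remark:one-sided} makes every computable approximation of such a real two-sided, and Lemma~\ref{lem:twsspee}, which is stated for arbitrary computable approximations, then makes each of them speedable. Your extra remark ruling out $a_s=\alpha$ is harmless but not needed, because failing to be left-sided and failing to be right-sided already give infinitely many terms strictly above and strictly below~$\alpha$.
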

As for \ca speedability, with the following proposition, Lemma~\ref{lem:twsspee} actually implies that all \ca reals are \ca speedable.
\begin{proposition}\label{prop:equidelta}
Every \ca real has a two-sided computable approximation.
\end{proposition}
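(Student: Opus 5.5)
Given a computable approximation $\aset{a_s}\siome$ of a c.a.\ real $\alpha$, the plan is to interleave it with a shrinking computable perturbation that alternates in sign, producing a new approximation that lies infinitely often strictly below and infinitely often strictly above $\alpha$.

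If $\alpha$ is rational, say $\alpha=p/q$, take $b_s=\alpha+(-1)^s2^{-s-2}$. For $s$ large this lies in $[0,1]$, since $\alpha\in(0,1)$. Finitely many initial terms can be replaced by $\alpha$ itself to stay inside the unit interval. This sequence is two-sided and converges to $\alpha$.

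Now suppose $\alpha$ is irrational. First I would enlarge the approximation so that its deviations are controlled. Set $\epsilon_s=\sup_{t\ge s}\abs{a_t-a_s}$; this is not computable, but we never need it. Instead I will use the fact that we only need infinitely many terms on each side.

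The core construction goes as follows. Consider the two computable sequences
\[
\ell_s = a_s - 2^{-s} \qquad\text{and}\qquad u_s = a_s + 2^{-s},
\]
both converging to $\alpha$, and interleave them as $b_{2s}=\ell_s$, $b_{2s+1}=u_s$. These are clipped to $[0,1]$, which is harmless since $\alpha\in(0,1)$ and the clipping changes only finitely many terms.

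This alone is not guaranteed to be two-sided, because $a_s$ might approach $\alpha$ from above much more slowly than $2^{-s}$. To fix this, replace the fixed offsets $2^{-s}$ by offsets that are large relative to the current oscillation. I would use
\[
d_s=\max_{t\le s}\abs{a_{t+1}-a_t}\cdot 0+\ \max\{\abs{a_s-a_t}: s\le t\le g(s)\}+2^{-s},
\]
where $g$ is chosen adaptively.

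A cleaner route avoids all of this by passing to a subsequence. Since $\alpha$ is irrational, for each $s$ the dyadic interval structure lets us search effectively. Fix $n$ and search for a stage $t>$ (previous stage) and a dyadic $q$ with denominator $2^n$ such that $\abs{a_t-q}\le 2^{-n}$. Then output $q-2^{-n+1}$ followed by $q+2^{-n+1}$. Once $n$ is large, the true value satisfies $\abs{\alpha-a_t}$ small; but this still requires $\abs{\alpha-a_t}<2^{-n}$, which is not decidable.

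The honest obstacle is exactly this: we cannot know when $a_t$ is within $2^{-n}$ of $\alpha$. The resolution is that we do not need to know. Define the outputs in order of $n$. At step $n$, pick the first $t_n>t_{n-1}$ such that $\abs{a_{t'}-a_{t_n}}$ is small for the already-seen window; this is still not effective.

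Instead, exploit convergence directly. Use $b_{2n}=a_{h(n)}-\eta_n$ and $b_{2n+1}=a_{h(n)}+\eta_n$, where $\eta_n\to0$ is computable and $h$ is a computable order. Because $a_s\to\alpha$, there is a noncomputable order $N$ with $\abs{\alpha-a_s}<2^{-n}$ for all $s\ge N(n)$. Take $\eta_n=2^{-m}$, where $m$ grows so slowly that infinitely often $h(n)\ge N(m)$. This works by a diagonal choice: let $m(n)$ be the number of times $m$ has been incremented, and cycle through each $m$ for infinitely many $n$, e.g.\ with $\eta$ enumerating $2^{-0},2^{-0},2^{-1},2^{-0},2^{-1},2^{-2},\dots$.

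This cyclic choice breaks convergence, since $\eta_n\not\to0$. The fix is to give each fixed $m$ a single final window $n\ge n_m$ in which $\eta=2^{-m}$, which makes $\eta$ monotone.

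The honest plan is therefore to take $\eta_n=2^{-m}$ in blocks with $h$ growing, and then check two-sidedness. For a given $m$, the block has length $L_m$. If for every $n$ in the block $a_{h(n)}$ were not within $2^{-m}$ of $\alpha$, then the terms would still satisfy $b_{2n}<\alpha<b_{2n+1}$ only when $\abs{\alpha-a_{h(n)}}<\eta_n$. That holds for all large $n$ whenever $\eta_n$ decreases more slowly than $\abs{\alpha-a_{h(n)}}$. This is achieved by letting $h$ grow fast enough, but it is impossible to guarantee computably in general.

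I therefore expect the key step, and the main obstacle, to be guaranteeing $\abs{\alpha-a_{h(n)}}<\eta_n$ infinitely often. My first attempt will be a priority-free argument by contradiction, assuming the resulting approximation is eventually one-sided and invoking Remark~\ref{remark:one-sided}-style reasoning to derive that $\alpha$ is rational or the offsets are too small.
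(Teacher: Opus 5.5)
Your rational case is fine, but for irrational $\alpha$ the proposal does not contain a proof. You abandon each candidate construction yourself, and the closing plan cannot work. If your interleaved sequence $a_{h(n)}\pm\eta_n$ were eventually on one side of $\alpha$, the running-maximum argument of Remark~\ref{remark:one-sided} would only show that $\alpha$ is \lce or \rcen, which is no contradiction. ``The offsets are too small'' is not a contradiction either: it is exactly what can happen.

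In fact, any choice in which the offsets $\eta_n$ are computable, nonincreasing and fixed in advance (your $2^{-s}$, or your blocks of $2^{-m}$) fails for some reals, whatever computable order $h$ you use. Let $\Omega$ be a random \lce real with a \lce approximation $\aset{\Omega_s}\siome$, and let $N_j$ be the least $n$ with $\eta_n\le 2^{-j}$. If $\Omega-\Omega_{h(n)}<\eta_n$ for some $n\in[N_j,N_{j+1})$, then $\eta_n\le 2^{-j}$ and $\Omega_{h(n)}\le\Omega_{h(N_{j+1})}\le\Omega$. Hence $\Omega\in[\Omega_{h(N_{j+1})},\Omega_{h(N_{j+1})}+2^{-j}]$. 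If this happened for infinitely many $n$, these intervals would form a Solovay test covering $\Omega$. So for $\Omega$ your approximation is left-sided.

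The missing idea is that the offset at a stage must be triggered by the approximation itself. It then tends to $0$, yet is large infinitely often at exactly the right moments. Your cycling idea points this way, but without a data-driven trigger it destroys convergence. The paper proceeds as follows:
\begin{enumerate}
\item[(1)] It takes the $a_s$ to be pairwise distinct dyadic rationals and lets $p(s)$ (called $h(s)$ there) be the most significant bit position at which $a_s$ and $a_{s-1}$ differ.
\item[(2)] It outputs $a_s-2^{-p(s)}$ and $a_s+2^{-p(s)}$, clipped to $[0,1]$.
\item[(3)] Since $\alpha$ is irrational, $p(s)\to\infty$, so these values converge to $\alpha$. Moreover, there are infinitely many record stages $s$ with $p(t)>p(s)$ for all $t>s$.
\item[(4)] At such a stage every later $a_t$ agrees with $a_s$ on the first $p(s)$ bits. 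By irrationality this gives $\abs{\alpha-a_s}<2^{-p(s)}$, that is, $a_s-2^{-p(s)}<\alpha<a_s+2^{-p(s)}$.
\end{enumerate}
This self-certifying record stage is what your proposal lacks.
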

\begin{proof}
This is obvious in case~$\alpha$ is rational, so from now on, we can assume otherwise. Fix some computable approximation $\aset{a_s}\siome$ of~$\alpha$ where, by Remark~\ref{remark:dyadic-approximation}, we can assume that the~$a_s$ are pairwise distinct dyadic rationals in the interval~$(0,1)$. For each $s > 0$, let $h(s)$ be the position of the most significant bit at which~$a_s$ and~$a_{s-1}$ differ, where, as usual, more significant means closer to the decimal point and counting of positions starts with~$1$. Let 
\[a^-_s = \max\{a_s - 2^{-h(s)}, 0\} \makebox[5em]{  and }
a^+_s = \min\{a_s + 2^{-h(s)}, 1\}.
\]
As $\aset{a_s}\siome$ converges to the irrational real~$\alpha$ and the~$a_s$ are pairwise distinct, the values~$h(s)$ tend to infinity.
Then there are infinitely many stages~$s$ such that $h(t) > h(s)$ for all $t > s$. To see this, assume otherwise, then there is $s_0$ such that for all $s \ge s_0$, there exists $t>s$ with $h(t) \le h(s)$. Then $\liminf_{s\to\infty} h(s) \le h(s_0)$, contradicts to the fact that $\lim_{s\to\infty} h(s) = \infty$.
Now for each such stage~$s$, $a_s$ and $\alpha$ agree on the bits up to position~$h(s)$. Clearly, $a^-_s < \alpha < a^+_s$.
In summary, the sequence $\aset{a^-_1, a^+_1, a^-_2, a^+_2, \dots}$ is a two-sided computable approximation of $\alpha$.
\end{proof}

\begin{corollary}\label{thm:equidelta}
Every \ca real is \ca speedable.
\end{corollary}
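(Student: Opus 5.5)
The plan is to combine Proposition~\ref{prop:equidelta} with Lemma~\ref{lem:twsspee}. Let $\alpha$ be a \ca real. If $\alpha$ is rational, then by Definition~\ref{def:rho-speedable} every computable approximation of $\alpha$ is speedable. Any computable approximation of $\alpha$, for instance the one obtained from Remark~\ref{remark:dyadic-approximation}, then witnesses that $\alpha$ is \ca speedable. So from now on we assume that $\alpha$ is irrational.

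First, Proposition~\ref{prop:equidelta} gives a two-sided computable approximation $\aset{b_s}\siome$ of $\alpha$. For the record, in its proof this approximation interleaves the values $a^-_s$ and $a^+_s$. It is computable because the values $h(s)$ are computable from the dyadic rationals $a_s$ and $a_{s-1}$. It converges to $\alpha$ because $h(s)\to\infty$ and $a_s\to\alpha$, so both $a^-_s$ and $a^+_s$ tend to $\alpha$. Infinitely many of its terms lie strictly on each side of $\alpha$, so it is two-sided in the sense defined in the notation section.

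Second, Lemma~\ref{lem:twsspee} applies to this approximation, since it is a two-sided computable approximation. The lemma yields that $\aset{b_s}\siome$ is speedable, that is, $\rho$-speedable for some $\rho<1$ via some computable order $f$. By the first clause of Definition~\ref{def:realspd}, $\alpha$ therefore has a speedable \ca approximation, which is exactly what it means to be \ca speedable.

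I expect no real obstacle here, since the statement is a direct composition of two earlier results. The only points to check are bookkeeping ones. One is that the approximation from Proposition~\ref{prop:equidelta} takes values in $[0,1]$, which the truncations $\max\{\cdot,0\}$ and $\min\{\cdot,1\}$ ensure. The other is that the speedup produced by Lemma~\ref{lem:twsspee} is a computable order, which holds because each function $f_i$ in its proof is computable and strictly increasing.
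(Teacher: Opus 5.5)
Your proposal is correct and follows the paper's own argument: Proposition~\ref{prop:equidelta} gives a two-sided computable approximation of the real, and Lemma~\ref{lem:twsspee} shows that this approximation is speedable, which is what it means for the real to be \ca speedable. Your extra bookkeeping (the separate rational case, values lying in $[0,1]$, the speedup being a computable order) is harmless, but it is already covered by the cited proposition, the lemma, and Definition~\ref{def:rho-speedable}.
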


With a more involved argument, we show the following stronger result.
\begin{theorem}\label{theorem:ca-reals-are-zero-speedable}
Every \ca real is strongly \ca 0-speedable.
\end{theorem}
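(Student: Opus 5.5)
The plan is to modify a given computable approximation directly, along the lines of the proofs of Lemma~\ref{lem:nonrandce} and Proposition~\ref{prop:equidelta}. Since c.a.\ approximations are not required to have bounded variation, we can afford a deliberate overshoot at every stage, provided the overshoots tend to~$0$. If~$\alpha$ is rational there is nothing to show by definition, so assume~$\alpha$ is irrational. By Remark~\ref{remark:dyadic-approximation}, fix a computable approximation~$\aset{a_s}\siome$ of~$\alpha$ consisting of pairwise distinct dyadic rationals in~$(0,1)$. As in the proof of Proposition~\ref{prop:equidelta}, let~$h(s)$ for~$s>0$ be the most significant bit position at which~$a_s$ and~$a_{s-1}$ differ. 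Then~$h(s)\to\infty$.

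Call a stage~$s$ \emph{true} if~$h(t)>h(s)$ for all~$t>s$. The proof of Proposition~\ref{prop:equidelta} shows that there are infinitely many true stages. At a true stage~$s$, no later approximation changes a bit at a position~$\le h(s)$, so~$a_s$ and~$\alpha$ agree on the first~$h(s)$ bits. Hence
\[
\abs{\alpha-a_s}\le 2^{-h(s)}.
\]

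Next define the new approximation by
\[
c_{2s}=a_s\pm 2^{-\lceil h(s)/2\rceil}, \qquad c_{2s+1}=a_s .
\]
The sign is~$+$ if this keeps the value in~$[0,1]$ and~$-$ otherwise. Since~$a_s\in(0,1)$ and~$2^{-\lceil h(s)/2\rceil}\le 1/2$, at least one sign works. This sequence is computable and consists of dyadic rationals in~$[0,1]$. Both subsequences converge to~$\alpha$ because~$h(s)\to\infty$, so~$\aset{c_s}\siome$ is a c.a.\ approximation of~$\alpha$.

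Now fix a true stage~$s$ and write~$h=h(s)$. The triangle inequality gives
\[
\abs{\alpha-c_{2s}}\ge 2^{-h/2}-2^{-h},
\]
and therefore
\[
\abs*{\frac{\alpha-c_{2s+1}}{\alpha-c_{2s}}}\le \frac{2^{-h}}{2^{-h/2}-2^{-h}}=\frac{1}{2^{h/2}-1}.
\]
The denominator~$\alpha-c_{2s}$ is nonzero because~$\alpha$ is irrational. True stages~$s$ are infinite in number and~$h(s)\to\infty$ along them, so the bound tends to~$0$. Thus
\[
\liminf_{n\to\infty}\abs*{\frac{\alpha-c_{n+1}}{\alpha-c_n}}=0,
\]
which is exactly strong c.a.\ $0$-speedability via~$s\mapsto s+1$.

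The step I expect to need the most care is the claim that~$a_s$ is within~$2^{-h(s)}$ of~$\alpha$ at true stages. This bound is what drives the ratio to~$0$, and it relies on~$\alpha$ being irrational and on reading "agreement on the first~$h(s)$ bits" correctly via binary expansions of dyadic rationals. If this step causes trouble, the fallback is to replace~$h(s)$ by a computable lower bound on the agreement length that is certified to hold at infinitely many stages; the same overshoot construction then goes through unchanged.
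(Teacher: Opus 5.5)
Your proof is correct, and it takes a genuinely different and more elementary route than the paper. The paper runs a finite-injury style construction. For each index $s$ it keeps a pointer $g(s)$ and increments it until $|a_i-a_{g(s)}|<|a_i-a_s|/s$ holds at the current stage $i$. By the equivalence \eqref{eq:proof-ca-reals-are-zero-speedable}, each $s$ receives attention only finitely often, and the last pair $b_{2t_s}=a_s$, $b_{2t_s+1}=a_{g(s)}$ output for $s$ has error ratio at most $1/s$. You instead reuse the bit-settling stages from the proof of Proposition~\ref{prop:equidelta}. The computable quantity $2^{-h(s)}$ is a correct error bound for $a_s$ at infinitely many stages, even though these stages cannot be identified computably. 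Interleaving each $a_s$ with a decoy at the intermediate scale $2^{-\lceil h(s)/2\rceil}$ therefore drives the ratio to $0$ along those stages. Your version needs no injury argument, gives an explicit rate, and keeps the original approximation as the odd subsequence. It also shows clearly why this case is easier than Lemma~\ref{lem:nonrandce}: there the overshoots $\delta_s$ must be summable, which is where the Martin-L\"of test comes in, whereas here they only need to tend to $0$. The paper's construction, for its part, never introduces new values (every $b_n$ is some $a_t$) and relies only on the convergence of $\{a_s\}$, not on binary expansions. Two small slips should be fixed. First, $|c_{2s}-a_s|=2^{-\lceil h/2\rceil}$, so the lower bound should read $|\alpha-c_{2s}|\ge 2^{-\lceil h/2\rceil}-2^{-h}$; this gives the ratio bound $1/(2^{\lfloor h/2\rfloor}-1)$ for $h\ge 2$, which still tends to $0$. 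Second, $h(0)$ is undefined, so start the construction at $s=1$. The step you flagged as delicate is fine as written, so the fallback is unnecessary.
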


\begin{proof}
Let~$\alpha$ be a \ca real. Fix some computable approximation~$\aset{a_s}\siome$ of~$\alpha$.
By Remark~\ref{remark:dyadic-approximation}, we can assume that the~$a_s$ are pairwise distinct dyadic rationals.
Observe that since the values~$a_i$ converge to~$\alpha$, for every pair of fixed indices~$s>0$ and~$g(s)>s$ it holds that
\begin{equation}\label{eq:proof-ca-reals-are-zero-speedable}
\abs*{\frac{\alpha - a_{g(s)}}{\alpha-a_{s}}} < \frac{1}{s} 
\quad\; \text{ if and only if } \quad
\abs*{\frac{a_i - a_{g(s)}}{a_i-a_{s}}} < \frac{1}{s}
\;\;\text{ for all sufficiently large~$i$}.    
\end{equation}
We construct in stages a computable approximation~$\aset{b_s}\siome$ of~$\alpha$ that is \ca $0$-speedable via the function~$s \mapsto s+1$.
At each stage~$i>0$, we maintain a function~$g \colon \Nat \rightarrow \Nat$ with~$g(s) \ge s$ for all~$s$.
The construction resembles a finite-injury priority argument. 
We say that index~$s<i$ \emph{requires attention} at stage~$i$ in case~$s$ and the current value of~$g(s)$ do not satisfy the inequality on the right-hand side of \eqref{eq:proof-ca-reals-are-zero-speedable}.

\medskip \noindent {\bf Construction: }
\textit{At stage~$0$:} we initialize the function $g$ by letting~$g(s)=s$ for all~$s$.\\
\textit{At stage~$i>0$:} let~$s\le i$ be the minimum such that~$s$ requires attention. We increment the value~$g(s)$ by one, and let~$b_{2i}=a_s$ and~$b_{2i+1}=a_{g(s)}$.
We say the index~$s$ \emph{receives attention} at this stage.

\medskip \noindent {\bf Verification: }
Note that at stage~$i>0$ an index~$s<i$ that requires attention always exists: at the beginning of stage~$i$ there is some~$0<s\le i$ where~$g(s)=s$ since in the previous stages at most~$i-1$ values of~$g$ were incremented.

Now we show that each index~$s$ receives attention at most finitely often. Indeed, since the~$a_i$ converge to~$\alpha$, for every~$s$ there is a natural number~$m_s > s$ such that for $\delta = \abs*{\alpha - a_s} > 0$, and for all~$i, t \ge m_s$,
\[ \abs{a_i - a_s} > \frac{\delta}{2} \quad  \& \quad \abs{a_i - a_t} < \frac{\delta}{2s}, \quad \text{ which implies } \quad \abs*{\frac{a_i - a_t}{a_i-a_s}} < \frac{1}{s}. \]
Thus~$s$ will receive attention at most~$m_s-s$ times, after which~$g(s) \ge m_s$ holds.

Fix some index~$t$, and let~$i$ be a stage where~$b_{2i}=a_t$ or~$b_{2i+1}=a_t$. Then either~$t$ or an index~$s<t$ where~$t=g(s)$ receives attention at stage~$i$, which can happen at most finitely often. Thus, each value~$a_t$ occurs at most finitely often in the sequence~$\aset{b_s}\siome$.
Then the sequence~$\aset{b_s}\siome$ converges to $\alpha$ and is a computable approximation of~$\alpha$ by construction.

Finally, we show that the approximation~$\aset{b_s}\siome$ is \ca $0$-speedable via the function~$s \mapsto s+1$.
For each $s$, let $t_s$ be the stage after which index~$s$ never receives attention again. Then $t_s \ge s$, and $s$ and the final value of~$g(s)$ do satisfy the inequality on the right-hand side of \eqref{eq:proof-ca-reals-are-zero-speedable} for all~$i > t_s$, which then implies the inequality on the left-hand side of \eqref{eq:proof-ca-reals-are-zero-speedable}.
In other words, for each~$s$, there exists~$t_s \ge s$ such that
\[ \abs*{\frac{\alpha - b_{2t_s+1}}{\alpha-b_{2t_s}}} < \frac{1}{s}, \quad \text{ which then implies } \quad \liminf_{s\rightarrow\infty}\abs*{\frac{\alpha - b_{s+1}}{\alpha-b_{s}}} = 0.  \]
\end{proof}
\section{Left-c.e.\ speedability and randomness}\label{sec:speedability-and-randomness}
As discussed in Section~\ref{subsec:leftce-speedability}, the \lce speedable reals form a proper subclass of the nonrandom \lce reals~\cite{merkleSpeedableLeftcNumbers, holzlBenignApproximationsNonspeedability2023}. As a step towards a characterization of the \lce speedable reals via notions of randomness, we demonstrate in this section that a real is \lce speedable if and only if it is covered by a restricted type of Solovay test. Before that, we first discuss the connections between effective approximations and Solovay tests in general. This relates to the observation of Rettinger~\cite[Theorem~9.2.4]{rodenisbook} that given a two-sided \dce approximation~$\aset{a_s}\siome$ of some real, this real is covered by the Solovay test that is formed by the intervals~$[\min\{a_s, a_{s+1}\}, \max\{a_s, a_{s+1}\} ]$.
\begin{definition}
Let~$\aset{I_i}\iiome$ be a Solovay test where~$I_i = [\ell_i, r_i]$. Note that by our convention, $0 \le \ell_i \le r_i \le 1$ for all $i$. We refer to~$\ell_0, r_0, \ell_1, r_1, \ldots$ as the \emph{sequence of endpoints} of this Solovay test, and its \emph{sequence of left endpoints} and \emph{sequence of right endpoints} are defined accordingly. The Solovay test then   
\begin{itemize}
\item[-] is \emph{converging} if its sequence of endpoints converges,
\item[-] is \emph{\dcen} if its sequence of endpoints has bounded variation,
\item[-] is \emph{\lcen} if its sequence of left endpoints is nondecreasing,
\item[-] \emph{has bounded increments} if for some rational constant~$d>0$ it holds for all~$i$ that
    \begin{equation}\label{eq:AT}
      \ell_{i+1} \ge \ell_i + d(r_i - \ell_i) .
    \end{equation}
\end{itemize}
\end{definition}
By definition, every Solovay test with bounded increments is \lcen, and similarly, \lce implies \dcen, which in turn implies converging. Observe that a Solovay test with sequence of endpoints~$\ell_0, r_0, \ell_1, r_1, \ldots$ is \dce if and only if the sum over the terms~$\abs{\ell_{i+1}-r_i}$ is finite.

By \emph{limit} of a converging Solovay test, we refer to the limit of its sequence of endpoints.  
\begin{remark}\label{remark:bounded-solovay-covers-uniquely}
A converging Solovay test cannot cover any real other than its limit. For a proof, assume that some real~$\gamma$ is covered by a converging Solovay test~$\aset{I_i}\iiome$. Then there are infinitely many intervals~$I_i$ that contain~$\gamma$, and since the interval lengths tend to~$0$, the left endpoints of these intervals converge to~$\gamma$. So the sequence of endpoints converges to~$\gamma$ since it converges and has a subsequence that converges to~$\gamma$.
Note that a converging Solovay test may fail to cover its limit, and then does not cover any real at all. For example, for a strictly increasing \lce approximation~$\aset{a_s}\siome$, the Solovay test~$\aset{[a_s, a_{s+1}]}\siome$ is converging but does not cover any real.
\end{remark}

\begin{remark}\label{remark:test-give-approximations}
Consider a converging Solovay test that covers a real~$\gamma$. Then the sequence of endpoints of this test is computable and converges to~$\gamma$, hence is a \ca approximation of~$\gamma$. Similarly, if the Solovay test is \dcen, then the sequence of endpoints is a \dce approximation of~$\gamma$, and if the Solovay test is \lcen, then its sequence of left endpoints is a \lce approximation of~$\gamma$.   
\end{remark}

\begin{proposition}\label{prop:various-solovay-tests}
$\;$
\begin{itemize}
\item[(i)] A real is nonrandom and \ca if and only if it is covered by a  converging Solovay test.
\item[(ii)] A real is nonrandom and \dce if and only if it is covered by a \dce Solovay test.
\item[(iii)] A real is nonrandom and \lce if and only if it is covered by a \lce Solovay test.
\end{itemize}
\end{proposition}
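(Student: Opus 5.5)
For each of the three parts I would prove the ``if'' direction uniformly and then build a suitable test for the ``only if'' direction. Suppose $\gamma$ is covered by a converging Solovay test. By Remark~\ref{remark:bounded-solovay-covers-uniquely}, $\gamma$ is the limit of the test. A real covered by a Solovay test is nonrandom, so $\gamma$ is nonrandom. By Remark~\ref{remark:test-give-approximations}, the sequence of endpoints is a \ca approximation of $\gamma$. If the test is \dcen, this sequence is even a \dce approximation, and if the test is \lcen, the sequence of left endpoints is a \lce approximation. This settles all three ``if'' directions.

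For the ``only if'' directions, fix a nonrandom $\gamma$ of the relevant type. I may assume $\gamma$ is irrational: a rational $\gamma$ is covered by the test $\{[\gamma-2^{-i},\gamma]\}$, or by $[\gamma,\gamma]$ repeated, which is \lce and \dce with dyadic adjustments. Fix a Solovay test $\{[\ell'_i,r'_i]\}$ covering $\gamma$ with dyadic endpoints. Fix also an approximation $\{a_s\}$ of $\gamma$ of the appropriate kind (computable, \dcen, \lcen), consisting of dyadic rationals as in Remark~\ref{remark:dyadic-approximation}. The task is to thin out the given test so that it converges, while still keeping infinitely many intervals that contain $\gamma$.

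The idea is to interleave the test with the approximation. At stage $s$, look for the least unused $i$ (subject to an ordering constraint) such that the current approximation $a_t$, for some $t$ beyond the previously used index, lies in $[\ell'_i,r'_i]$. Output $[\ell'_i,r'_i]$ and mark $i$ as used. Every interval containing $\gamma$ in its interior is eventually entered by the approximation, so infinitely many covering intervals are output, apart from endpoint issues, which I would handle by slightly enlarging the intervals. To make the endpoints converge, I would pad with degenerate intervals $[a_t,a_t]$ in between.

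\begin{itemize}
\item[(i)] Each chosen interval contains some $a_t$ with $t\to\infty$, and its length tends to $0$. Hence its endpoints are within that length of $a_t$, and the endpoint sequence converges to $\gamma$.
\item[(ii)] The total variation of the endpoint sequence is bounded by the variation of $\{a_t\}$ plus twice the sum of the lengths, so it is finite. This mirrors the proof of Lemma~\ref{lem:nonrandce}.
\item[(iii)] For the \lce case I would instead output intervals $[a_t, \max(a_t,r'_i)]$ whenever $a_t\in[\ell'_i,r'_i]$. The left endpoints $a_t$ are nondecreasing. The lengths are bounded by $r'_i-\ell'_i$, and each $i$ is used at most once. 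When $\gamma\in(\ell'_i,r'_i)$, eventually some $a_t\ge\ell'_i$, and since $a_t<\gamma$ the interval $[a_t,r'_i]$ still contains $\gamma$. Between such outputs, the left endpoints must keep rising, which I would ensure by always using the current $a_t$.
\end{itemize}

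The main obstacle is the bookkeeping showing that every interval truly containing $\gamma$ eventually gets used, given the rule that each $i$ is used at most once and that we search only among intervals hit by later approximants. Here I would argue as in Lemma~\ref{lem:nonrandce}: an interval containing $\gamma$ in its interior is hit by $a_t$ for cofinitely many $t$. Taking the least unused such index, any fixed covering index is used after finitely many stages. Strings sitting on the boundary can be avoided by first replacing the given test with the enlarged intervals $[\ell'_i-(r'_i-\ell'_i), r'_i+(r'_i-\ell'_i)]$.
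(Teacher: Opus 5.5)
Your proposal is correct and is essentially the paper's argument. The right-to-left directions come from Remarks~\ref{remark:bounded-solovay-covers-uniquely} and~\ref{remark:test-give-approximations}. For the converse you thin the given test by appending, at each stage, the least-indexed unused interval $I_i$ (with $i \le t$) that contains the current approximant $a_t$, and you truncate it to $[a_t, r_i]$ in the \lce case, exactly as the paper does. The padding with degenerate intervals and the enlargement of intervals are harmless but unnecessary. For irrational $\gamma$ and dyadic endpoints, every interval containing $\gamma$ already contains it in its interior. Convergence already follows because each appended interval contains $a_t$ and the appended lengths tend to~$0$.
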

\begin{proof}
For all three equivalences, the right-to-left direction is immediate by Remark~\ref{remark:test-give-approximations} and since being nonrandom is equivalent to being covered by a Solovay test. 

For a proof of the left-to-right direction of $(i)$, let the real~$\alpha$ be nonrandom and \ca Let~$\aset{I_i}\iiome$ be a Solovay test that covers~$\alpha$ and let~$\aset{a_i}\iiome$ be a \ca approximation of~$\alpha$. We can assume that~$\alpha$ is not rational, since otherwise there is nothing to prove. We can assume further that the intervals~$I_i$ are pairwise distinct. We define a sequence of intervals by a construction in stages~$s=0, 1, \ldots$. Initially, call all intervals~$I_i$ unused. At stage~$s$, if there is some index~$i \le s$ such that~$a_s$ is in the currently unused interval~$I_i$, append the interval with the least such index to the constructed sequence and declare this interval to be used. The constructed sequence of intervals is indeed a Solovay test since the construction is effective, the sequence contains only intervals from the given Solovay test, and each such interval is appended at most once. The constructed Solovay test covers~$\alpha$ because every interval~$I_i$ that contains~$\alpha$ also contains almost all~$a_s$, thus it follows by an easy inductive argument that every such intervals will eventually be appended to the constructed sequence. Finally, the constructed Solovay test is converging, since the~$a_s$ converge and the length of the intervals that are appended during some stage~$s$ tends to~$0$ when~$s$ tends to infinity.   

In the proof of the left-to-right direction of $(ii)$, we apply the same construction as above. However, now we choose~$\aset{a_i}\iiome$ as a \dce approximation of~$\alpha$. Let the constructed sequence of intervals be~$\aset{J_i}\iiome$, where~$J_i = [\ell_i , r_i]$.  We have already seen that~$\aset{J_i}\iiome$ is a Solovay test that covers~$\alpha$, so it suffices to show that this test is  actually \dce Let~$s_i$ be the stage at which~$J_i$ is appended, hence~$a_{s_i}$ is a member of~$J_i$ for all~$i$. Consequently, we have 
\[
|\ell_{i+1} - r_ i| = |\min J_{i+1} - \max J_{i}| \le  |a_{s_{i+1}} -a_{s_i}| + |J_i| + |J_{i+1}| ,
\]
which implies
\begin{equation}\label{eq:upper-bound-dce-test}
\sum_{\iiome} \underbrace{|r_i - \ell_i |}_{= |J_i|} + |\ell_{i+1} - r_ i| 
\le  3  \sum_{\iiome} |J_i| + \sum_{\iiome}|a_{s_{i+1}} -a_{s_i}| \le  3  \sum_{\iiome} |J_i| + \sum_{\iiome}|a_{i+1} -a_i| .
\end{equation}
Note that the last inequality in~\eqref{eq:upper-bound-dce-test} holds by the triangle inequality.
Since~$\aset{J_i}\iiome$ is a Solovay test, and the sequence~$\aset{a_i}\iiome$ has bounded variation, the upper bound in~\eqref{eq:upper-bound-dce-test} is finite.
As a consequence, the sequence of endpoints~$r_0, \ell_0, r_1, \ell_1, \ldots$ of the Solovay test~$\aset{J_i}\iiome$ has finite variation, hence the test is \dce

In the proof of the left-to-right direction of $(iii)$, we apply a variant of the same construction, where now we choose~$\aset{a_i}\iiome$ as a \lce approximation of~$\alpha$. The only difference is that whenever in the previous construction during some stage~$s$ some interval~$I$ would be appended, we append instead the interval~$I  \cap [a_s, 1]$. Observe that the latter interval contains~$\alpha$ if and only if~$I$ contains~$\alpha$ since we have~$a_s < \alpha < 1$. Consequently, the construction yields again a Solovay test that covers~$\alpha$, which is obviously \lce
\end{proof}

\begin{proposition}
A real is \lce speedable if and only if it is covered by a Solovay test that has bounded increments. 
\end{proposition}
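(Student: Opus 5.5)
The plan is to prove both directions directly, translating between the speedability inequality for the step $s\mapsto s+1$ and membership of~$\alpha$ in a suitable interval. For the forward direction I use Theorem~\ref{thm:equilce}(ii) to get a \emph{strongly} speedable approximation. For the backward direction I use the left endpoints of the test as the approximation.

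\emph{Left to right.} Let~$\alpha$ be \lce speedable. The case of rational~$\alpha$ is handled separately. There one can take intervals $[q_i,q_i']$ where $q_i$ is a dyadic lower bound of~$\alpha$ within~$2^{-i}$, chosen nondecreasing, and $q_i'$ is a dyadic upper bound of~$\alpha$ within~$2^{-i}$. A short computation then gives bounded increments, possibly after thinning the sequence; this is routine, and I will not dwell on it.

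For irrational~$\alpha$, Theorem~\ref{thm:equilce}(ii) gives a \lce approximation $\aset{a_s}\siome$ of~$\alpha$ with $\alpha-a_{s+1}\le \frac12(\alpha-a_s)$ for infinitely many~$s$. By Remark~\ref{remark:dyadic-approximation} the $a_s$ can be taken to be distinct dyadic rationals. The choice $\rho=\tfrac12$ is made so that all endpoints below stay dyadic. Rearranging, the inequality is equivalent to
\[
\alpha\le a_s+\frac{a_{s+1}-a_s}{1-\rho}=a_s+2(a_{s+1}-a_s).
\]
So I set $I_s=[a_s,\min\{1,\,a_s+2(a_{s+1}-a_s)\}]$. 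Since $a_s<\alpha<1$, this interval contains~$\alpha$ for the infinitely many good~$s$. The lengths telescope to at most $2(\alpha-a_0)$, so $\aset{I_s}\siome$ is a Solovay test covering~$\alpha$. Finally,
\[
\ell_{s+1}-\ell_s=a_{s+1}-a_s\ge \tfrac12(r_s-\ell_s),
\]
so the test has bounded increments with $d=\tfrac12$.

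\emph{Right to left.} Let $\aset{[\ell_i,r_i]}\iiome$ be a Solovay test with bounded increments and constant~$d$ that covers~$\alpha$. The left endpoints are nondecreasing and bounded, hence convergent. The lengths tend to~$0$, so the test is converging. By Remark~\ref{remark:bounded-solovay-covers-uniquely}, its limit is~$\alpha$, and $\aset{\ell_i}\iiome$ is a \lce approximation of~$\alpha$ with $\ell_i\le\alpha$ for all~$i$. If some $\ell_i=\alpha$, or if $d\ge1$, then~$\alpha$ is rational. Indeed, in the latter case $\alpha\in I_i$ forces $\ell_{i+1}\ge r_i\ge\alpha$, so $\ell_{i+1}=\alpha$. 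Rational reals are speedable by definition. Otherwise, for each~$i$ with $\alpha\in I_i$ we have $r_i-\ell_i\ge\alpha-\ell_i$, and therefore
\[
\alpha-\ell_{i+1}\le \alpha-\ell_i-d(r_i-\ell_i)\le (1-d)(\alpha-\ell_i).
\]
Since this happens for infinitely many~$i$, the approximation $\aset{\ell_i}\iiome$ is $(1-d)$-speedable via $s\mapsto s+1$, with $1-d\in(0,1)$. Hence~$\alpha$ is \lce speedable.

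The main obstacle is bookkeeping rather than ideas. In the forward direction one must invoke Theorem~\ref{thm:equilce} to get speedability specifically via $s\mapsto s+1$ with a convenient~$\rho$, and keep the endpoints dyadic and inside $[0,1]$. Both directions also need separate care for degenerate rational cases.
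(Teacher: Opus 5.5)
Your proof follows the paper's route in both directions. Strong \lce speedability from Theorem~\ref{thm:equilce} yields the intervals $[a_s,a_s+2(a_{s+1}-a_s)]$ with $d=\frac12$, and conversely the left endpoints of the test form an approximation that is $(1-d)$-speedable via $s\mapsto s+1$. The backward direction is fine. The forward direction has a slip exactly where you depart from the paper. By Definition~\ref{def:rho-speedable}, $\rho$-speedability via $s\mapsto s+1$ only means $\liminf_{s\to\infty}\abs{(\alpha-a_{s+1})/(\alpha-a_s)}\le\rho$. So for $\rho=\frac12$ you do \emph{not} get $\alpha-a_{s+1}\le\frac12(\alpha-a_s)$ for infinitely many~$s$: all these ratios could exceed $\frac12$ and only approach it along a subsequence. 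Since $\alpha\in[a_s,a_s+2(a_{s+1}-a_s)]$ holds if and only if the ratio at~$s$ is at most~$\frac12$, your test might then not cover~$\alpha$ at all. The repair is to apply Theorem~\ref{thm:equilce}(ii) with some $\rho<\frac12$, which gives ratios strictly below~$\frac12$ infinitely often. The paper's choice of $\rho=\frac13$ with the same factor~$2$ provides exactly this slack.

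A smaller point: Remark~\ref{remark:dyadic-approximation} only provides \emph{some} dyadic \lce approximation of~$\alpha$, not one that is still speedable via $s\mapsto s+1$. So the claim that the $a_s$ can be taken dyadic needs an argument. With $\rho<\frac12$ there is room for one. First discard repeated terms; good indices satisfy $a_{s+1}>a_s$, so good pairs survive as consecutive terms. Then round each $a_s$ up to a dyadic rational in $[a_s,a_s+\varepsilon(a_{s+1}-a_s))$ for a small fixed $\varepsilon>0$. This keeps the sequence strictly increasing below~$\alpha$ and keeps the ratio below~$\frac12$ infinitely often. The paper's proof does not address dyadic endpoints, clipping at~$1$, or rational~$\alpha$, so on those points your write-up is the more careful one.
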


\begin{proof}
Let~$\alpha$ be a \lce speedable real.
Then~$\alpha$ is strongly \lce $\frac{1}{3}$-speedable by Theorem~\ref{thm:equilce}.
Let $\aset{a_s}\siome$ be a \lce approximation of~$\alpha$ which is $\frac{1}{3}$-speedable via the function $s \mapsto s+1$.
Then for infinitely many~$s$, $\alpha - a_{s+1} \le \frac13(\alpha -a_s)$.
If we let~$I_s=[a_s,a_s + 2(a_{s+1} - a_s)]$, then~$\alpha\in I_s$ for each such~$s$, i.e., $\alpha$ is covered by the test~$\aset{I_s}\siome$. This test has finite measure $2(\alpha-a_0)$, hence is a Solovay test, and by construction has bounded increments with constant~$d=\frac{1}{2}$.

For the converse, let~$\alpha$ be covered by some Solovay test $\aset{[\ell_i,r_i]}\iiome $ that has bounded increments for some constant~$d>0$. Then the nondecreasing sequence~$\ell_0, \ell_1, \ldots$ converges to~$\alpha$, hence is a \lce approximation to~$\alpha$. Let~$i$ be any of the infinitely many indices such that $\alpha\in [\ell_i,r_i]$. We have~$\ell_i \le \ell_{i+1} \le \alpha \le r_i$ and~$\ell_{i+1} - \ell_i \ge d (r_i - \ell_i) \geq d(\alpha - \ell_{i})$, which implies~$d\le1$ and  

\[
\frac{\alpha - \ell_{i+1}}{\alpha - \ell_{i}} \leq \frac{\alpha - (\ell_i + d(r_i - \ell_i))}{\alpha - \ell_{i}} = 1-d .
\]

Consequently, $\aset{\ell_i}\iiome$ is $(1-d)$-speedable, hence~$\alpha$ is \lce speedable. 
\end{proof}
\section*{Acknowledgements}
Barmpalias was supported by Beijing Natural Science Foundation grant No.\ IS24013
Fang is supported by Beijing Natural Science Foundation grant No.\ 1262022 and was supported by NSFC grant No.\ 12001519.
Merkle and Titov were supported by DFG project 556436876.
Titov was also supported by the ANR project FLITTLA ANR-21-CE48-0023.
We would like to thank Liang Yu for helpful discussions. Furthermore, we are grateful to the anonymous referees for their careful review and suggestions.
\bibliographystyle{plainnat}
\bibliography{bibliography} 
\end{document}